\documentclass{amsart}
\usepackage[all,cmtip]{xy}
\usepackage{graphicx}
\usepackage{subfigure}

\newcommand{\NN}{\mathbb{N}}
\newcommand{\ZZ}{\mathbb{Z}}
\newcommand{\RR}{\mathbb{R}}
\newcommand{\CC}{\mathbb{C}}
\DeclareMathOperator{\val}{val}
\DeclareMathOperator{\LLP}{LLP}
\DeclareMathOperator{\NLLP}{NLLP}
\DeclareMathOperator{\ord}{ord}
\DeclareMathOperator{\Div}{Div}
\DeclareMathOperator{\Pic}{Pic}
\DeclareMathOperator{\ev}{ev}
\DeclareMathOperator{\SYT}{SYT}
\DeclareMathOperator{\RSYT}{RSYT}

\theoremstyle{plain}
\newtheorem{theorem}{Theorem}[section]
\newtheorem*{theorem*}{Theorem}
\newtheorem{proposition}[theorem]{Proposition}
\newtheorem{lemma}[theorem]{Lemma}

\theoremstyle{definition}
\newtheorem{definition}[theorem]{Definition}
\newtheorem{example}[theorem]{Example}
\newtheorem{notation}{Notation}
\newtheorem{algorithm}{Algorithm}
\theoremstyle{remark}
\newtheorem{remark}[theorem]{Remark}

\begin{document}
    \title[Involutions on standard Young tableaux and divisors on graphs]{Involutions on standard Young tableaux and divisors on metric graphs}
\author{Rohit Agrawal}
\email{agraw025@umn.edu}
\author{Gregg Musiker}
\email{musiker@math.umn.edu}
\author{Vladimir Sotirov}
\email{sotirov@math.wisc.edu}
\author{Fan Wei}
\email{fanwei@alum.mit.edu}
\date{\today}
\begin{abstract}
    We elaborate upon a bijection discovered by Cools, Draisma, Payne, and Robeva in
    \cite{cools2010tropical} between the set of rectangular standard Young
    tableaux and the set of equivalence classes of chip configurations on certain metric
    graphs under the relation of linear equivalence. We present an explicit
    formula for computing the $v_0$-reduced divisors (representatives of the
    equivalence classes) associated to given tableaux, and use this formula to prove
    (i) evacuation of tableaux corresponds (under the bijection) to reflecting the 
    metric graph, and (ii) conjugation of the tableaux corresponds to taking the 
    Riemann-Roch dual of the divisor.
\end{abstract}
\maketitle
\section{Introduction}
In \cite{baker2008specialization}, Baker reduces the Brill-Noether Theorem, which
concerns linear equivalence classes of divisors on a normal smooth projective curve $X$
of genus $g$, to an analogous statement regarding linear equivalence classes of
divisors on certain abstract tropical curves. In \cite{cools2010tropical}, Cools, Draisma, Payne, and Robeva then use this reduction to provide a
tropical proof of the Brill-Noether Theorem valid over algebraically closed fields of any characteristic.

The abstract tropical curves in question are compact metric graphs $\Gamma_g$, illustrated in
Figure \ref{fig:gammagintro}, consisting of a chain of $g$ concatenated loops and
designated vertices $\{v_0,\ldots,v_g\}$ such that the edge lengths $(\ell_i,m_i)$ 
satisfy a certain genericity condition.

\begin{figure}[htbp]\label{fig:gammagintro}
    \includegraphics{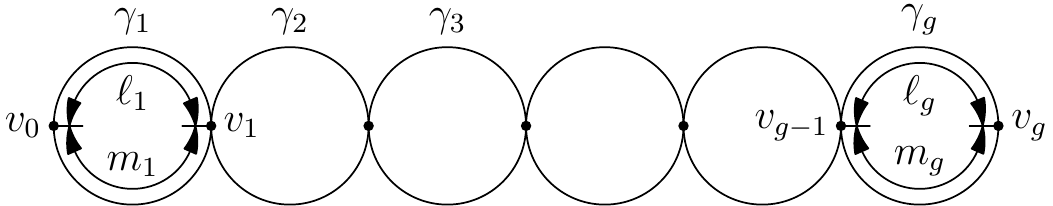}
\caption{A metric graph $\Gamma_g$ together with its designated vertices and
    edge lengths.}
\end{figure}

In the course of their proof, Cools et al.\ find a bijection from rectangular
standard Young tableaux to the linear equivalence classes of rank $r$ degree $d$ divisors on
$\Gamma_g$ that satisfy $g=(g-d+r)(r+1)$. This bijection raises the question of how
natural operations on rectangular standard Young tableaux, such as evacuation and conjugation (i.e. transpose), translate for divisors on generic metric graphs $\Gamma_g$. 
In this paper, we prove
that evacuation reflects the generic graph $\Gamma_g$ and that conjugation exchanges a divisor $c$ with its Riemann-Roch dual $K-c$.  Note that in this paper we only focus on the bijection aspect of the deep result of \cite{cools2010tropical}, and leave other applications for future work.

In Section~\ref{sec:Cools} we review both the background material necessary to understand
the bijection due to Cools et al.\, and the bijection itself. Given our combinatorial goal,
our presentation of their combinatorial results differs from the one in their paper. In particular,
we reformulate their theorem \cite[Theorem 1.4]{cools2010tropical} as a specification of an algorithm
for computing ranks of divisors on the graphs $\Gamma_g$. In this formulation, their bijection follows
as a property of that algorithm.

%

In Section~\ref{sec:res} we state and prove our first result,  Theorem~\ref{th:result}, equating evacuation and reflection, by providing formulas for the bijection and its compositions with these two operations.   In Section ~\ref{sec:res2}, we then discuss our second result, Theorem~\ref{th:result2}, equating conjugation of tableaux to the map $c \mapsto K-c$ on divisor classes. 

Before reading Sections~\ref{sec:res} and \ref{sec:res2}, the reader familiar with the combinatorics of
\cite{cools2010tropical} will need to read only Lemma~\ref{lem:park} and Subsection~\ref{subsec:alg}, in which we introduce the notation that we use to prove our main results.

\section{The bijection $\phi$ of Cools et al.}\label{sec:Cools}
In this section we will review the theory of divisors on metric graphs, and the results that
go into the derivation of the bijection of Cools et al.\ For a more detailed exposition on metric graphs,
we refer the reader to \cite{Haase2012,baker2006metrized}, which our exposition partially follows.
For an exposition of the tropical proof of the Brill-Noether theorem, we recommend the original paper
\cite{cools2010tropical}.

\subsection{Basic notions of compact metric graphs and their divisors}
In general, a \emph{metric graph} $\Gamma$ is a complete metric space such that every
point $x\in\Gamma$ is of some valence $n\in\NN$, meaning that there exists an $\epsilon$-ball
centered at $x$ isometric to the star-shaped metric
subspace $V(n,\epsilon)=\{te^{i\frac{kn}{2\pi}}\colon0\leq t<\epsilon,
	k\in\ZZ\}\subset\CC$ endowed with the path metric.
    
A \emph{model} of a compact metric graph $\Gamma$ is a finite weighted multigraph without loops $G$ such that its vertex set
$V$, which is a finite subset of $\Gamma$, satisfies the property that the connected components of $\Gamma \setminus V$ are isometric to open intervals.
The weighted edge multiset $E$ of $G$ is uniquely determined by the set of $G$'s vertices as follows.
For each connected component of $\Gamma \setminus V$ whose boundary points in $\Gamma$ are precisely $\{v,w\}\subset V$, we
add an edge between $v$ and $w$ of weight equal to the length of the isometric open interval\footnote{By abuse of notation, we will identify the edges of $G$ with the closed intervals that are isometric to the closures of
the connected components of $\Gamma \setminus V$.  By disallowing models which contain loops, we ensure that these closures are still line segments.}.
In particular, the designated vertices $\{v_0,\ldots,v_n\}$
give one model for the compact metric graph $\Gamma_g$ (illustrated in Figure~\ref{fig:gammagintro}), but that is not
the only possible model, as illustrated in Figure~\ref{fig:difmodels}.

\begin{figure}[htbp]
\centering
\subfigure{\includegraphics{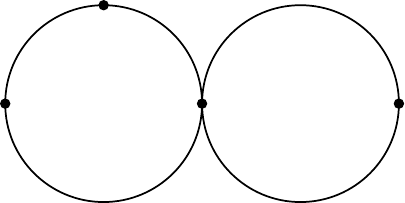}}
\hspace{1cm}
\subfigure{\includegraphics{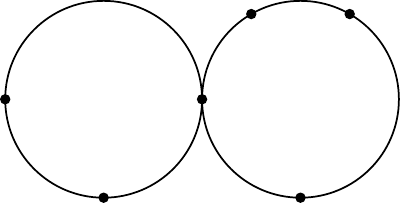}}
\caption{Two other models for the compact metric graph $\Gamma_2$.}
\label{fig:difmodels}
\end{figure}

The \emph{genus} $g$ of a compact metric graph $\Gamma$ can be defined to be $|E|-|V|+k$ where $(V,E)=G$ is a model of $\Gamma$
and $k$ is the number of connected components. It is easy to show that given two models $G_1=(V_1,E_1)$ and $G_2=(V_2,E_2)$,
both give the same genus as the model $G_3=(V_1\cup V_2,E_3)$. In particular, the genus of the graphs $\Gamma_g$ 
is precisely $g$.

A \emph{divisor} on a compact metric graph $\Gamma$ is an element
of the free abelian group $\Div(\Gamma)$ generated by the points of $\Gamma$.
There is nothing deep about the group of divisors; the deep analogy between the theory of
divisors on compact metric graphs and the theory of divisors on Riemann surfaces comes from the
definitions of rational functions on $\Gamma$, their
orders at points on $\Gamma$, and the consequent notion of equivalence of divisors,
which is strong enough for an analogue of the Riemann-Roch theorem to hold (the
so-called Tropical Riemann-Roch Theorem).

A \emph{rational function} $f$ on a compact metric graph $\Gamma$ is a
continuous function $f\colon\Gamma\to\mathbb R$ that is piecewise linear with
integer slopes in the following sense: there exists a model $G_f$ of $\Gamma$ such that the restriction
of $f$ to each edge is a linear function with integer slope. We set the \emph{order of $f$ at $x$},
$\ord_x(f)$, to be $0$ if $x$ is not a vertex of $G_f$, and otherwise we set $\ord_x(f)$ to be
the sum of the outgoing slopes along edges coming out of $x$. Note that $\ord_x(f)$ is non-zero for only finitely many
points $x$ and that $\ord_x\colon\Div(\Gamma)\to\ZZ$ is a homomorphism as $\ord_x(f+g) = \ord_x(f)+\ord_x(g)$.

Two divisors $c$ and $c'$ are said to be \emph{equivalent} if 
$c-c'=\sum_{x\in\Gamma}\ord_x(f)(x)$ for some rational function $f$. The divisors of the form
$\sum_{x\in\Gamma}\ord_x(f)$ form an abelian group called the group of \emph{principal divisors}.
The quotient of $\Div(\Gamma)$ by the group of principal divisors is denoted by $\Pic(\Gamma)$ and
consists of the equivalence classes of divisors under the above equivalence relation.

Given a divisor $c$, the \emph{degree of $c$} is defined to be
$\deg(c)=\sum_{x\in\Gamma}c(x)$. The degree is invariant under
equivalence, as any principal divisor has degree $0$. Hence every element of $\Pic(\Gamma)$
has a well-defined degree as well. Given an integer $d$, we define $\Pic^{\leq d}(\Gamma)$ to be the subset of $\Pic(\Gamma)$ with degree at most $d$.

A divisor $e=\sum_{i=1}^na_i(x_i)$ is said to be \emph{effective} if $a_i\geq0$ for all $1\leq i\leq n$. 
A divisor $c$ that is not equivalent to an effective divisor is said to have $\emph{rank $r(c)=-1$}$. A divisor
$c$ that is equivalent to an effective divisor is said to have $\emph{rank $r(c)=r$}$ if $r$ is the largest
number such that for every effective divisor $e$ of degree $r$, the divisor $c-e$ is equivalent to an effective divisor. Note that in
particular $r(c)\leq\min\{-1,\deg(c)\}$ since if $c-e$ has negative degree, then it cannot be equivalent
to an effective divisor.

The interest in this notion of rank stems from the fact that it
is invariant under equivalence of divisors and, more importantly, that it
satisfies an analogue of the Riemann-Roch theorem, first proven for finite
graphs by Baker and Norine in \cite{baker2007riemann} and subsequently generalized for
metric graphs and tropical curves independently by Gathmann and Kerber in \cite{gathmann2008riemann}
and Mikhalkin and Zharkov in \cite{mikhalkin465tropical}.

\begin{theorem}[Tropical Riemann-Roch] \label{TRR}
    Suppose that $c\in\Div(\Gamma)$ is a divisor on a compact metric
	graph. Define the canonical divisor $K$ on $\Gamma$ by
	$\displaystyle {K=\sum_{x\in\Gamma}(\val(x)-2)(x)}$. Then we have:
	\[
		r(c)-r(K-c)=\deg(c)+1-g
	\]
	where $g$ is the genus of $\Gamma$.
\end{theorem}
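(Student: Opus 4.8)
The plan is to adapt the combinatorial proof of Baker and Norine \cite{baker2007riemann} from finite graphs to the metric setting, which is in spirit the route of Gathmann--Kerber and Mikhalkin--Zharkov. The main tool is the notion of a \emph{$v$-reduced divisor}: fixing a point $v\in\Gamma$, a divisor $c$ that is effective on $\Gamma\setminus\{v\}$ is called $v$-reduced if for every closed connected subset $A\subseteq\Gamma$ with $v\notin A$ there is a boundary point $x\in\partial A$ at which $c(x)$ is strictly less than the number of directions at $x$ pointing out of $A$; equivalently, no ``set-firing'' move supported on $A$ can be performed while keeping $c$ effective on $A$. The first step is to prove that every class in $\Pic(\Gamma)$ contains a \emph{unique} $v$-reduced representative --- existence by a descent argument (iterating legal set-firings and using an energy functional to force termination), uniqueness by a version of Dhar's burning algorithm adapted to $\Gamma$. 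The immediate consequence is the \emph{effectivity criterion}: $r(c)\ge 0$, i.e.\ $c$ is equivalent to an effective divisor, if and only if the coefficient at $v$ of the $v$-reduced representative of $c$ is nonnegative.

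Next I would fix $v=v_0$ and choose a generic piecewise-linear height function on $\Gamma$; its slopes orient a model of $\Gamma$ acyclically and single out a reference divisor $\nu$ which places at each vertex one fewer chip than its indegree, so that $\deg\nu=|E|-|V|=g-1$ and, by the same local computation at each point, $\nu+\bar\nu=K$, where $\bar\nu$ is the reference divisor of the reversed height. The heart of the proof is the \emph{Riemann--Roch dichotomy}: for every divisor $c$ on $\Gamma$, exactly one of $c$ and $\nu-c$ is equivalent to an effective divisor. Granting the effectivity criterion, this becomes a statement about running the burning algorithm both ``from $v_0$'' and against the chosen orientation, and this is where the real difficulty lies: for finite graphs it is precisely Baker--Norine's key lemma, and the metric case demands the extra care of arguing with closed subgraphs of $\Gamma$ rather than with subsets of a finite vertex set.

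Given the dichotomy, the formula follows quickly. Writing $r(c)+1=\min\{\deg E: E\ge 0,\ c-E\text{ is not equivalent to an effective divisor}\}$ (the minimum exists because $\deg(c-E)<0$ already forces $c-E$ not to be equivalent to an effective divisor), the dichotomy turns the condition on $c-E$ into ``$\nu-c+E$ is equivalent to an effective divisor,'' and the analogous rewriting of $r(K-c)+1$ together with $\nu+\bar\nu=K$ yields $r(c)-r(K-c)=\deg(c)+1-g$ after a short manipulation. An alternative that avoids developing the metric reduced-divisor theory from scratch is to first subdivide $\Gamma$ at all rational points so that the divisors in question are supported on vertices and the finite Baker--Norine theorem applies directly, and then pass to arbitrary real edge lengths using a density argument and the upper semicontinuity of $r(\cdot)$ under perturbation of edge lengths; there the main obstacle migrates to establishing that semicontinuity. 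Either way, the single hard ingredient is the uniqueness of reduced divisors together with the dichotomy it powers.
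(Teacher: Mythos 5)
The paper does not prove this theorem at all: it is quoted as background, with the proof attributed to Baker--Norine for finite graphs and to Gathmann--Kerber and Mikhalkin--Zharkov for metric graphs, so your sketch can only be measured against those standard arguments. Most of your outline matches them (reduced divisors, uniqueness via Dhar burning, the effectivity criterion, $\nu+\bar\nu=K$ with $\deg\nu=g-1$), but the step you call the heart of the proof is stated incorrectly, and the error is not cosmetic. The dichotomy ``for every divisor $c$, exactly one of $c$ and $\nu-c$ is equivalent to an effective divisor'' is false for a \emph{fixed} moderator $\nu$. Take $\Gamma$ a single circle ($g=1$), a model with two vertices $u,v$ and both edges oriented from $u$ to $v$, so $\nu=(v)-(u)$ and $\deg\nu=0=g-1$. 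For a point $x\notin\{u,v\}$ set $c=(x)-(u)$: since on a circle two points are linearly equivalent only if they coincide, $c$ is not equivalent to an effective divisor (as $x\neq u$) and neither is $\nu-c=(v)-(x)$ (as $x\neq v$). This is unavoidable: the degree-$(g-1)$ classes with no effective representative form the complement of the theta divisor, a full-dimensional subset of the $g$-dimensional torus $\Pic^{g-1}(\Gamma)$, so no single class $\nu$ (indeed no finite set of them) can serve as the ``other side'' of the dichotomy.

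The correct key lemma, which is exactly where Baker--Norine's Theorem 3.3 and its metric analogues do the work, quantifies over moderators: for every divisor $c$, exactly one of (i) $r(c)\ge 0$, (ii) $c$ is equivalent to $\nu_{\mathcal O}-E$ for \emph{some} acyclic orientation $\mathcal O$ (in the metric case, of some model of $\Gamma$, so the $\nu_{\mathcal O}$ form a continuous family sweeping out the non-effective degree-$(g-1)$ classes) and some effective $E$. One then derives the formula not by the two-line rewriting you give but via the characterization $r(c)+1=\min\,\deg^{+}(c'-\nu_{\mathcal O})$ over $c'\sim c$ and all moderators, using that the family of moderators is stable under $\nu\mapsto K-\nu$ (orientation reversal). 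So either repair the lemma in this quantified form and redo the final manipulation accordingly, or commit to your alternative route (subdivision at rational points plus finite Baker--Norine plus an approximation/semicontinuity argument, which is essentially Gathmann--Kerber); in both cases the hard content you have deferred is precisely the part that still needs a proof.
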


\subsection{The graphs $\Gamma_g$, and their $v_i$-reduced divisors}
In their paper, Cools et al.\ reduce the verification of the Brill-Noether theorem to an
analysis of the ranks of divisors on members of the following family of genus $g$ graphs.
\begin{definition}\label{def:gamma}
    The compact metric graphs $\Gamma_g$ consist of $g$ circles $\{\gamma_i\}_{1\leq i\leq g}$
    of circumferences $\{\ell_i+m_i\}$ concatenated together in such a way that there exists a model
    with vertices $\{v_i\}_{0\leq i\leq g}$ so that for every $1\leq i\leq g$, $v_{i-1}$
    and $v_i$ are designated vertices of $\gamma_i$ and the two edges of
    $\gamma_i$ joining $v_i$ and $v_{i-1}$ have lengths $\ell_i$ and $ m_i$. The
	metric on $\Gamma_g$ is the path-length metric. See Figure \ref{fig:gammagintro} for an illustration.
\end{definition}

In particular, Cools et al.\ are interested in computing the rank of an arbitrary divisor on $\Gamma_g$.
In the case where the divisor has degree greater than $2g-2$, there is a simple answer using the tropical
Riemann-Roch theorem, agreeing with the answer in the case of algebraic curves.
\begin{proposition}
	Any divisor $c$ on the compact metric graph $\Gamma_g$ of degree greater than $2g-2$ has
	rank $\deg(c)-g$.
%
\end{proposition}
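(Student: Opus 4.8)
The plan is to derive the proposition directly from the Tropical Riemann-Roch Theorem (Theorem~\ref{TRR}), the only preliminary input being the degree of the canonical divisor $K$ on $\Gamma_g$. First I would record that $\deg(K)=2g-2$: fixing any model $G=(V,E)$ of $\Gamma_g$, every point of $\Gamma_g$ that is not a vertex has valence $2$ and hence contributes nothing to $K=\sum_{x\in\Gamma_g}(\val(x)-2)(x)$, so $\deg(K)=\sum_{v\in V}(\val(v)-2)=2|E|-2|V|$ by the handshake identity $\sum_{v\in V}\val(v)=2|E|$. Since $\Gamma_g$ is connected with genus $g=|E|-|V|+1$ (as established just after Definition~\ref{def:gamma}), this gives $\deg(K)=2(|E|-|V|+1)-2=2g-2$, independently of the chosen model.

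Next, given a divisor $c$ with $\deg(c)>2g-2$, I would look at the Riemann-Roch dual $K-c$. Its degree is $\deg(K)-\deg(c)=2g-2-\deg(c)<0$. Because $\deg$ is invariant under equivalence and every effective divisor has nonnegative degree, a divisor of negative degree cannot be equivalent to an effective divisor, so by definition $r(K-c)=-1$. Substituting into Theorem~\ref{TRR} then yields
\[
r(c)=\deg(c)+1-g+r(K-c)=\deg(c)+1-g-1=\deg(c)-g,
\]
which is exactly the assertion. As a sanity check, for $g\ge 1$ this forces $r(c)=\deg(c)-g>g-2\ge -1$, so $c$ is in particular equivalent to an effective divisor, consistent with having nonnegative rank.

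I do not expect any real obstacle here: the computation $\deg(K)=2g-2$ is routine once connectedness and the genus formula for $\Gamma_g$ are in hand, and the rest is a one-line application of Riemann-Roch combined with the already-noted fact that negative-degree divisors have rank $-1$. The only place to be slightly careful is to note that $\deg(K)$ does not depend on the choice of model, which follows from the model-independence of the genus discussed in the excerpt.
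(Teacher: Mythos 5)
Your argument is correct and is exactly the route the paper intends: the text introducing this proposition states that the answer follows from the Tropical Riemann-Roch theorem, and your proof (computing $\deg(K)=2g-2$, noting $\deg(K-c)<0$ forces $r(K-c)=-1$, then substituting into Theorem~\ref{TRR}) is the standard one-line application of it. No gaps; the preliminary check that $\deg(K)$ is model-independent is a nice touch but routine.
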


In the case where the divisor has degree at most $2g-2$, the computation of the rank becomes extremely
difficult in general. For graphs $\Gamma_g$ satisfying a certain genericity condition, Cools et al.\ 
give an elementary algorithm for performing the computation. In this subsection we describe
first the input to the algorithm, which consists of certain divisors known
as $v$-reduced divisors, and second the genericity condition that the graphs $\Gamma_g$ must satisfy for the
algorithm to work correctly.

The notion of $v$-reduced divisors for graphs was first introduced by Baker and Norine in \cite{baker2007riemann}
as a slight variant of the notion of $G$-parking functions introduced by Postnikov and Shapiro in \cite{postnikov2003}.
Their importance for the theory of divisors stems from the fact that they provide a system of representatives of
the group $\Pic(\Gamma)$ of equivalence classes of divisors. Unfortunately, the language of divisors is somewhat 
clunky for describing the $v$-reduced divisors and their properties, so instead we consider a divisor 
$\sum_{i=1}^n a_i(x_i)\in\Div(\Gamma)$ to be the \emph{chip configuration} that assigns to each point $x_i$ 
the respective amount of $a_i$ chips. From here onward, we will use the terms ``chip configuration'' and ``divisor'' 
interchangeably. In particular, by the degree and rank of a chip configuration we will mean the degree or rank of that divisor.

The following definition of $v$-reduced divisors for metric graphs is due to Luo \cite{luo2011}.
\begin{definition}
	Fix a point $v$ on a connected compact metric graph $\Gamma$. A chip
	configuration $c\in\Div(\Gamma)$ is called a \emph{$v$-reduced divisor} if:
	\begin{enumerate}
		\item $c$ has a non-negative number of chips on every point except for possibly $v$;
		\item for any closed connected subset $X\subset\Gamma$ not containing $v$, 
            there exists a point $x$ in the boundary of $X$ such that the number of chips on $x$ is
            strictly smaller than the number of edges from $x$ to $\Gamma\setminus X$
            (the number of edges joining $x$ to a point
            in $\Gamma\setminus X$ for any model $G$ of $\Gamma$ such that $x$ is a vertex in $G$ and $G$ restricts to a model of $X$).
	\end{enumerate}
\end{definition}

In \cite{luo2011}, Luo generalizes the so-called burning algorithm due to Dhar \cite{dhar1990self}
for finding $v$-reduced divisors from the case of finite graphs to the case of metric graphs, and uses it to
prove that for any $v$ in a connected compact metric graph $\Gamma$, the set of $v$-reduced divisors
is a system of representatives for $\Pic(\Gamma)$.
\begin{theorem} [Theorem 2.3 of \cite{luo2011}]
	Suppose that $\Gamma$ is a connected compact metric graph, and that $v$ is a
	designated point on $\Gamma$. Then every chip configuration
	$c\in\Div(\Gamma)$ is equivalent to exactly one $v$-reduced divisor.
\end{theorem}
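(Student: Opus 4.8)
The plan is to prove existence and uniqueness by two independent arguments: uniqueness via a maximum principle for the rational function realizing the equivalence, and existence by first invoking Tropical Riemann--Roch to pass to an equivalent divisor that is effective away from $v$, and then iterating a metric version of Dhar's burning algorithm. The hard part will be showing that the burning-algorithm iteration terminates---or rather converges---in the metric setting.

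For uniqueness, suppose $c_1$ and $c_2$ are both $v$-reduced and $c_1-c_2=\sum_{x\in\Gamma}\ord_x(f)\,(x)$ for some rational function $f$; I claim $f$ is constant, so $c_1=c_2$. Work in a model $G_f$ on whose edges $f$ is linear, and set $X=f^{-1}(\max_\Gamma f)$, a closed set; suppose toward a contradiction that $v\notin X$. Since $f$ is linear on each edge, $X$ meets an edge in $\emptyset$, the whole edge, or a single endpoint, so $\partial X$ consists of vertices of $G_f$; at such a vertex $x$, an incident edge either stays in $X$, where $f$ is constant with outgoing slope $0$, or leaves $X$, where $f$ strictly decreases with integer outgoing slope at most $-1$. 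Hence $\ord_x(f)\le -d_X(x)$, where $d_X(x)$ is the number of edges from $x$ into $\Gamma\setminus X$. Let $X_0$ be a connected component of $X$; because $f$ is linear on edges, no edge joins $X_0$ to $X\setminus X_0$, so $d_{X_0}(x)=d_X(x)$ for $x\in X_0$. Applying condition~(2) for $c_2$ to the closed connected set $X_0$, which avoids $v$, yields $x\in\partial X_0$ with $c_2(x)<d_{X_0}(x)=d_X(x)$; then $c_1(x)=c_2(x)+\ord_x(f)\le c_2(x)-d_X(x)<0$ with $x\ne v$, contradicting condition~(1) for $c_1$. Therefore $v\in f^{-1}(\max_\Gamma f)$, and running the same argument on $-f$ (with $c_1,c_2$ exchanged) gives $v\in f^{-1}(\min_\Gamma f)$; hence $\min_\Gamma f=f(v)=\max_\Gamma f$ and $f$ is constant.

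For existence, pick $k$ with $\deg(c+k(v))>\max\{2g-2,\,g-1\}$; then by Theorem~\ref{TRR} the divisor $c+k(v)$ has rank $\deg(c+k(v))-g\ge0$, hence is equivalent to an effective divisor $E$, so $c\sim E-k(v)$ is effective away from $v$. Replacing $c$ by this representative, I would iterate Dhar's burning algorithm: ignite $v$, let fire spread through $\Gamma$ at unit speed, and ignite a point $w\ne v$ once the number of directions at $w$ from which fire has already arrived exceeds $c(w)$. If the fire consumes all of $\Gamma$, then for every closed connected $X\not\ni v$ the first point $x$ of $X$ to ignite lies on $\partial X$, and at that moment the burnt directions at $x$ all point out of $X$ while their number exceeds $c(x)$, so $c(x)<d_X(x)$; thus condition~(2) holds, and together with effectivity away from $v$ this shows $c$ is $v$-reduced. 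Otherwise let $A$ be the nonempty unburnt region, which does not contain $v$; by construction every boundary point $x$ of $A$ has $c(x)\ge d_A(x)$, so ``firing $A$ by a small amount $\varepsilon$''---subtracting $\sum_x\ord_x(g)\,(x)$ for $g(x)=\min\!\big(0,\operatorname{dist}(x,A)-\varepsilon\big)$, which slides chips off $\partial A$ into the burnt region surrounding $v$---produces an equivalent divisor still effective away from $v$, and one repeats. The main obstacle is proving this stops: in the metric setting the amounts $\varepsilon$ one may legally fire can shrink to $0$, so instead of a finite process one obtains a sequence of divisors, all of degree $\deg(c)$ and effective away from $v$, which one must show converges---using compactness of $\Gamma$ and of its symmetric powers---to a limiting divisor that is still equivalent to $c$ and, upon passing the burning algorithm to the limit, satisfies conditions~(1) and~(2). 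Making this convergence rigorous, in particular controlling which points of $\Gamma$ can accumulate chips in the limit, is the technical core of the statement, and here I would follow Luo's treatment in \cite{luo2011}.
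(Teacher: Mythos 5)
First, note that this statement is not proved in the paper at all: it is quoted as Theorem 2.3 of \cite{luo2011} and used as a black box, so your proposal must stand on its own. Your uniqueness half does: the maximum-principle argument is complete and correct. Working in a model $G_f$, your observation that $X=f^{-1}(\max_\Gamma f)$ meets each edge in $\emptyset$, a single endpoint, or the whole edge justifies both that $\partial X$ consists of vertices with $\ord_x(f)\le -d_X(x)$ and that a connected component $X_0$ has no edges to $X\setminus X_0$, so condition (2) for $c_2$ on $X_0$ collides with condition (1) for $c_1$ exactly as you say; repeating with $-f$ pins $v$ in both the max and min sets. This is the standard proof of uniqueness and you have supplied the details that matter.

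The existence half, however, contains a genuine gap, and you name it yourself: the convergence of the iterated ``fire the unburnt set by a small $\varepsilon$'' procedure is precisely the nontrivial content of the theorem in the metric setting, and ``here I would follow Luo's treatment'' outsources rather than proves it. In particular, it does not suffice to extract a convergent subsequence of the degree-$d$ divisors in a symmetric power of $\Gamma$: one must also show the limit is still linearly equivalent to $c$ (which requires control of the rational functions realizing the equivalences, e.g.\ a uniform slope bound and an equicontinuity/limit argument, since equivalence classes are not obviously closed under such limits) and that the limit actually satisfies conditions (1) and (2). The usual remedies are to fire by the maximal legal amount at each stage, organized so that only finitely many stages occur, or to characterize the $v$-reduced representative variationally by minimizing a suitable functional over the equivalence class, as in \cite{luo2011}; some argument of this kind must be supplied for the proof to close. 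Two smaller remarks: invoking Theorem \ref{TRR} to get a representative effective away from $v$ is acceptable here because tropical Riemann--Roch \cite{gathmann2008riemann,mikhalkin465tropical} is established independently (so there is no circularity with $v$-reduced divisors on metric graphs, though this is worth saying explicitly), but it is heavier machinery than the step requires; and your burning rule and the inequality $c(x)\ge d_A(x)$ on $\partial A$ for the unburnt set are stated correctly, so the only missing piece is the termination/convergence analysis.
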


Using Luo's generalization of Dhar's burning algorithm, which we will not describe, one can easily
compute for any chip configuration $c$ on $\Gamma_g$ the $v_i$-reduced divisors equivalent to $c$. Since
the rank of a chip configuration is invariant under equivalence, being able to determine the rank of 
$v_i$-reduced divisors is enough to determine the rank of any chip configuration.

Cools et al.\ noticed that the $v_i$-reduced divisors on the graph $\Gamma_g$ have an elementary description
which follows almost immediately from the definitions. This description in turn suggests a compact
notation for the elements of $\Pic(\Gamma_g)$ once we identify them with $v_0$-reduced divisors. 
\begin{proposition} [Example 2.6 of \cite{cools2010tropical}]
	The $v_i$-reduced divisors on $\Gamma_g$ are precisely those chip configurations $c\in\Div(\Gamma_g)$
    for which:
	\begin{enumerate}
		\item every point different from $v_i$ has a non-negative number of
			chips;
		\item each of the cut loops to the right of $v_i$ (given by
			$\gamma_j\setminus\{v_{j-1}\}$ for $j > i$) and to the left of
			$v_i$ (given by $\gamma_j\setminus\{v_j\}$ for $j \leq i$) contain at
			most $1$ chip.
	\end{enumerate}
\end{proposition}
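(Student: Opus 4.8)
The plan is to peel off the easy half of the statement and then verify the remaining equivalence directly against Luo's definition of a $v_i$-reduced divisor. Condition (1) of the proposition is word-for-word condition (1) in that definition, so the content is to show that, for a chip configuration $c$ non-negative away from $v_i$, condition (2) of the definition is equivalent to condition (2) of the proposition. Throughout I would exploit the structural fact that deleting $v_i$ disconnects $\Gamma_g$ into at most two pieces, the left part $L=(\gamma_1\cup\dots\cup\gamma_i)\setminus\{v_i\}$ and the right part $R=(\gamma_{i+1}\cup\dots\cup\gamma_g)\setminus\{v_i\}$: any connected set avoiding $v_i$ lies in one of them, and by the evident left--right symmetry it suffices to treat the right part, whose closure $\gamma_{i+1}\cup\dots\cup\gamma_g$ is a chain of loops with $v_i$ a free endpoint and whose cut loops are the sets $C_k=\gamma_k\setminus\{v_{k-1}\}$, each a circle with a single vertex deleted. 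Call a nonempty closed connected $X$ with $v_i\notin X$ all of whose boundary points $x$ carry at least as many chips as there are edges from $x$ to $\Gamma_g\setminus X$ a \emph{blocking set}; then $c$ is $v_i$-reduced if and only if it satisfies condition (1) and admits no blocking set.

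For the implication ``every cut loop holds at most one chip $\Rightarrow$ no blocking set'' I would argue by contraposition. Given a blocking set $X$, reduce to $X\subseteq R$ and let $j_0$ be the least index with $X\cap C_{j_0}\ne\emptyset$; connectivity together with $v_i\notin X$ and $X\cap C_{i+1}=\dots=X\cap C_{j_0-1}=\emptyset$ forces $v_{j_0-1}\notin X$, so $Y:=X\cap\gamma_{j_0}$ is a nonempty \emph{proper} closed subset of the circle $\gamma_{j_0}$ whose boundary within that circle lies in $C_{j_0}$ and consists of boundary points of $X$ in $\Gamma_g$. A short trichotomy on $Y$ then finishes it: if $Y$ is a single point $p$, both edges of $\gamma_{j_0}$ at $p$ leave $X$, forcing $c(p)\ge 2$; if $Y$ is a larger connected arc, its two distinct endpoints each have an edge leaving $X$, so each carries a chip; and if $Y$ is disconnected, each of its components contributes a distinct such boundary point carrying a chip. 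In all cases $C_{j_0}$ carries at least two chips.

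For the reverse implication, ``some cut loop holds at least two chips $\Rightarrow$ $c$ is not $v_i$-reduced,'' I would exhibit an explicit blocking set. Take a cut loop $C_j$ with at least two chips, with $j$ least and (by symmetry) $j>i$; the blocking set $X$ will be the region that Dhar's fire started at $v_i$ fails to reach, which I would describe directly as $\{v_j\}\cup\gamma_{j+1}\cup\dots\cup\gamma_g$ together with, on each of the two arcs of $\gamma_j$ joining $v_{j-1}$ to $v_j$, a closed sub-arc running from the chip nearest $v_{j-1}$ inward toward $v_j$---the exact shape depending on a brief case split according to whether the two excess chips lie on the same arc of $\gamma_j$, on opposite arcs, or (partly) at $v_j$. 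One checks that $X$ is closed, avoids $v_i$ (since $j>i$), is connected after passing if necessary to its component meeting $\gamma_j$, and that every boundary point of $X$ lies on $C_j$ at a chip with exactly one incident edge leaving $X$ (or two, at a doubly chipped point or at $v_j$), so that the two-chip hypothesis makes $X$ a blocking set.

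The part I expect to be the real obstacle is the bookkeeping around the cut loops, concentrated in the explicit construction above. Since each $C_k$ with $k>i$ is a circle with a vertex deleted---an open arc whose two ends both limit to $v_{k-1}$ while $v_k$ sits in its interior, these arcs being concatenated at the $v_k$---one must carefully verify that a connected $X$ cannot enter and leave a single loop $\gamma_k$ except through $v_{k-1}$ or $v_k$, that the ``first point of $X$'' along an arc is at positive distance from the deleted vertex, that $v_j$ and everything to its right become interior to the blocking set, and that the number of edges leaving $X$ is read off correctly at the $4$-valent vertices $v_k$. Once that is pinned down the whole statement collapses to the elementary observation that a fire entering a circle at one vertex consumes the entire circle precisely when the circle carries at most one chip, given that all valences in $\Gamma_g$ are at most four. (Alternatively one could simply invoke Luo's generalization of Dhar's burning algorithm run from $v_i$, but since the paper does not otherwise develop that algorithm the self-contained route above is preferable.)
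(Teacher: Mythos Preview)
The paper does not actually prove this proposition: it quotes it as Example~2.6 of \cite{cools2010tropical} and remarks just before the statement that the description ``follows almost immediately from the definitions.'' So there is nothing to compare against beyond that one-line appeal to the source.

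Your plan is a correct self-contained verification of that claim and in fact supplies more detail than either this paper or the cited example. A couple of minor simplifications are available. In the forward direction, once you know $v_{j_0-1}\notin X$ and $X$ is connected, the set $Y=X\cap\gamma_{j_0}$ is automatically connected (the only bridge from $\gamma_{j_0}$ to the rest of $R$ is $v_{j_0}$), so the ``$Y$ disconnected'' case cannot occur and your trichotomy is really a dichotomy. In the reverse direction you can avoid the multi-case description by taking, for any two (not necessarily distinct) chipped points $p,q\in C_j$, the closed arc of $\gamma_j$ from $p$ to $q$ that avoids $v_{j-1}$, and then adjoining $\gamma_{j+1}\cup\dots\cup\gamma_g$ exactly when that arc contains $v_j$; the boundary is then $\{p,q\}$ (or $\{p\}$ if $p=q$), each boundary point has the required outdegree into the complement, and no separate ``same arc / opposite arcs / at $v_j$'' split is needed. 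The minimality of $j$ is also unnecessary. With these tweaks the argument really is the ``elementary'' check the paper alludes to.
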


\begin{example}
\label{ex:running}
Consider a compact metric graph $\Gamma_6$ with edge lengths $m_1=m_2=\dots=m_6=1$ and
$\ell_1=\ell_2=\dots=\ell_6=10$ (see Figure \ref{fig:gammagintro}). The following, which we will use
as a running example, illustrates a $v_0$-reduced divisor
which has $2$ chips on $v_0$ and at most $1$ chip on every loop $\gamma_i\setminus\{v_{i-1}\}$ for $1\leq i \leq g$. Note that the extra vertices shown in the first two loops are unnecessary, but help clarify the distances in this model.

    \begin{center}
    \includegraphics{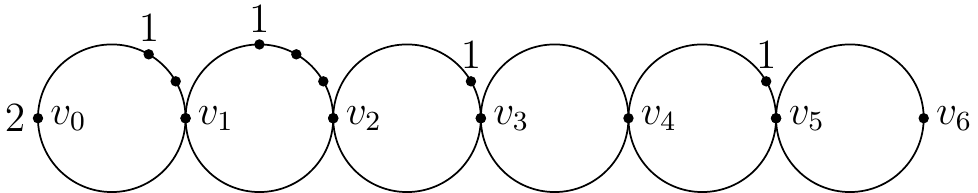}
    \end{center}
    
\end{example}

\begin{definition}\label{def:seq}
	To every $v_0$-reduced divisor on $\Gamma_g$, and hence to
	every element of $\Pic(\Gamma_g)$, we associate sequences
	$(d_0;x_1,x_2,\ldots,x_g)$ where $d_0$ is the number of chips on $v_0$, and
	$x_i\in\RR$ a distance (modulo the circumference of the
	$i^\text{th}$ loop) from $v_{i-1}$ in the counter-clockwise direction
    of the single chip on $\gamma_i\setminus\{v_{i-1}\}$, with $x_i=0$ if there is no chip (regardless of the possible existence of chips on $v_{i-1}$). These
	sequences are unique up to the modular equivalences $x_i\bmod(\ell_i+m_i)$.
	
    For Example \ref{ex:running} this sequence is $(2;3,4,2,0,2,0)$.
\end{definition}

Next, we describe the genericity condition that the graphs $\Gamma_g$ must satisfy for
the algorithm to work correctly. The following lemma, which motivates genericity, is Example~2.1 in
\cite{cools2010tropical} and is crucial both for their algorithm and for the proof of our own Theorem~\ref{th:result}.
\begin{lemma}[Recentering $v_i$-reduced divisors on $\Gamma_g$]\label{lem:park}
   Let $c$ be a $v_{i-1}$-reduced divisor.
   Suppose that $c$ has $k$ chips on $v_{i-1}$, and that the chip on
   $\gamma_i\setminus\{v_{i-1}\}$ is a counter-clockwise distance $x_i$ from
   $v_{i-1}$ (with $x_i=0$ if there is no chip).

   Then $c$ is equivalent to the $v_i$-reduced divisor $c'$ which agrees with $c$ everywhere
   outside $\gamma_i$, and which restricts on $\gamma_i$ according to the following cases: 
   \begin{enumerate}
	   \item If $x_i=0$ and $k\geq 1$, then $c'$ has $k-1$ chips on $v_i$ and one chip that is
		   $(k-1)m_i$ away clockwise from $v_{i-1}$;  
	   \item if $x_i\not\equiv (k+1)m_i\bmod (\ell_i+m_i)$ then $c'$ has $k$ chips
		   on $v_i$ plus a chip that is $km_i-x_i$ away clockwise from
		   $v_{i-1}$;
	   \item if $x_i\equiv (k+1)m_i\bmod (\ell_i+m_i)$, then $c'$ has $k+1$ chips on
		   $v_{i-1}$.
   \end{enumerate}
\end{lemma}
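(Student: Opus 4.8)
The plan is to exhibit, in each of the three cases, an explicit rational function $f$ on $\Gamma_g$ whose associated principal divisor realizes the claimed move, and then to check that the resulting divisor $c'$ satisfies the two conditions characterizing $v_i$-reduced divisors (Example 2.6 of \cite{cools2010tropical}). Since equivalence to a $v_i$-reduced divisor is unique, verifying these two properties suffices. Throughout, $f$ will be supported on $\gamma_i$: it will be constant (hence slope $0$) on every edge outside $\gamma_i$, so $\ord_x(f)=0$ for $x\notin\gamma_i$ and $c'$ automatically agrees with $c$ outside $\gamma_i$.

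The heart of the argument is the construction on the single loop $\gamma_i$. Cut $\gamma_i$ at $v_{i-1}$ to get an interval of length $\ell_i+m_i$; parametrize it counterclockwise so that the chip (if present) sits at position $x_i$. The function $f$ will be piecewise linear on this interval, built so that: near $v_{i-1}$ the incoming slopes sum to $-k$ (to remove $k$ chips from $v_{i-1}$, or, in case (3), to remove $k+1$ and be compatible with the chip); at $v_i$ the slopes produce the stated number of chips; and at the location of the single chip the slopes produce exactly one chip. Concretely, I expect $f$ to be a ``tent'' or ``sawtooth'' whose breakpoints are exactly $v_{i-1}$, $v_i$, the old chip position $x_i$, and the new chip position, with slopes $\pm1$ chosen so that the clockwise arc of length $m_i$ (from $v_{i-1}$ to $v_i$) is traversed with slope $\mp 1$ and this forces the new chip to land at distance $km_i-x_i$ (case (2)) or $(k-1)m_i$ (case (1)) clockwise from $v_{i-1}$. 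The congruence hypothesis $x_i\equiv(k+1)m_i\bmod(\ell_i+m_i)$ in case (3) is precisely the condition under which the ``new chip position'' computed in case (2) would coincide with $v_{i-1}$, so the chip merges into the pile on $v_{i-1}$, giving $k+1$ chips there and none on $v_i$; this is why case (3) is a degenerate instance of case (2) rather than an independent construction.

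After producing $f$, the verification that $c'$ is $v_i$-reduced splits into checking (1) non-negativity away from $v_i$ — which holds because the construction only ever moves chips within $\gamma_i$ and deposits a nonnegative number at each relevant point — and (2) the ``at most one chip per cut loop'' condition relative to the new basepoint $v_i$. For loops $\gamma_j$ with $j\neq i$ this is inherited from $c$ being $v_{i-1}$-reduced, after observing that cutting at $v_{j-1}$ versus $v_j$ does not change the chip count on $\gamma_j$ when $j<i$ or $j>i$; for $\gamma_i$ itself, one reads off from the explicit $c'$ that the cut loop $\gamma_i\setminus\{v_i\}$ contains exactly one chip in cases (1) and (2) and zero chips in case (3), as required. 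The bookkeeping needed to confirm that the slopes of $f$ are integers and that its breakpoints admit a common model is routine.

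The main obstacle I anticipate is getting the slope signs and the arithmetic of arc lengths exactly right so that the new chip lands at the asserted clockwise distance — in particular, tracking the difference between "distance from $v_{i-1}$" and "distance from $v_i$" across the two edges of lengths $\ell_i$ and $m_i$, and confirming that the boundary case (3) is captured by exactly the stated congruence and not an off-by-$m_i$ variant. Once the function $f$ is written down correctly, everything else is a direct computation of $\ord_x(f)$ at the four breakpoints.
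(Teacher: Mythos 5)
Your overall strategy --- realize the move by a rational function that is constant off $\gamma_i$, then invoke the loop characterization of $v_i$-reduced divisors together with uniqueness of the reduced representative --- is the natural one (note the paper does not prove this lemma at all; it cites Example~2.1 of \cite{cools2010tropical}, which proceeds along these lines). But the place you defer, ``getting the slope signs and the arithmetic of arc lengths exactly right,'' is exactly where your plan breaks. First pin down the orientation: for case (2) to be consistent, $v_i$ must sit at counter-clockwise distance $m_i$ from $v_{i-1}$ (so the clockwise arc from $v_{i-1}$ to $v_i$ has length $\ell_i$). With that convention, the equivalence class of a configuration on the loop, under functions constant off $\gamma_i$, is recorded by its degree together with the weighted sum of counter-clockwise positions modulo $\ell_i+m_i$, and one computes that the chip produced by the case (2) formula coincides with $v_{i-1}$ exactly when $x_i\equiv km_i$ --- which is an ordinary instance of case (2), since the cut loop $\gamma_i\setminus\{v_i\}$ is allowed to carry that one chip --- whereas the hypothesis $x_i\equiv(k+1)m_i$ of case (3) is precisely when that chip lands on $v_i$, merging to give $k+1$ chips on $v_i$ and none elsewhere on the loop. (The printed ``$v_{i-1}$'' in case (3) is a typo for $v_i$; this is confirmed by Algorithm~\ref{alg}, where $x_i\equiv(p_{i-1}(1)+1)m_i$ produces a step $e_1$, combined with Lemma~\ref{lem:chipshifting}, and by Figure~\ref{fig:chipshifting}.) Your proposal instead builds its case (3) around the literal statement: ``the chip merges into the pile on $v_{i-1}$, giving $k+1$ chips there and none on $v_i$.'' That configuration is not the answer: for $k\geq1$ it has more than one chip on the cut loop $\gamma_i\setminus\{v_i\}$, so it is not $v_i$-reduced, and it is not even equivalent to $c$, since equality of the position sums would force $(k+1)m_i\equiv 0\bmod(\ell_i+m_i)$, excluded by genericity. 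So the verification step of your plan would fail in case (3) as you have set it up, and the coincidence condition you identify ($x_i\equiv(k+1)m_i$ making the chip hit $v_{i-1}$) is off by one loop-step.

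A second, more minor, issue: a single ``tent'' with slopes $\pm1$ cannot realize the move when $k\geq 2$, because $\ord_{v_{i-1}}(f)=-k$ forces the two outgoing slopes of $f$ along $\gamma_i$ at $v_{i-1}$ to sum to $-k$ (the edges outside $\gamma_i$ contribute slope $0$). You need either slopes of magnitude up to $k$, or a composition of $k$ (resp.\ $k+1$) elementary one-chip moves; alternatively, skip the explicit function entirely by using that on a circle two effective divisors of the same degree are equivalent, via a function constant off the loop, if and only if their weighted position sums agree modulo the circumference --- then uniqueness of the $v_i$-reduced divisor plus the loop characterization determines $c'$ in all three cases simultaneously. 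Finally, record the boundary subtlety the paper itself flags after the lemma: in case (1) the chip at clockwise distance $(k-1)m_i$ lands on $v_i$ exactly when $km_i\equiv 0\bmod(\ell_i+m_i)$, which genericity rules out for divisors of degree at most $2g-2$.
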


Note that the first case can result in all $k$ chips being moved from
$v_{i-1}$ to $v_i$ if and only if $(k-1)m_i\equiv\ell_i\equiv-m_i\bmod(\ell_i+m_i)$, which is
the same as requiring that $km_i$ is an integer multiple of $\ell_i+m_i$,
i.e. that $\frac{\ell_i}{m_i}=\frac {k-n}n$ for some positive integer $n$. If $\deg(c)\leq 2g-2$,
however, then certainly $k\leq 2g-2$, so this can only happen if $\ell_i/m_i$ can be
written as the ratio of two integers with sum at most $2g-2$. Thus, the notion of genericity that we define below ensures that the first case of the lemma never results in all $k$ chips being moved from $v_{i-1}$ to $v_i$. 

\begin{definition}
    \label{def:generic}
    We say that $\Gamma_g$ is \emph{generic} if none of $\ell_i/m_i$ can
	be written as the ratio of two positive integers with sum at most $2g-2$.
\end{definition}

\begin{example}
The graph $\Gamma_6$ of Example~\ref{ex:running}
with edge lengths $m_1=m_2=\dots=m_6=1$ and $\ell_1=\ell_2=\dots=\ell_6=10$ is
generic since $\ell_i/m_i=10/1$ and $10+1 > 2\cdot g-2=2\cdot6-2=10$.
\end{example}

\subsection{The algorithm of Cools et al., and the bijection $\phi$}\label{subsec:alg}

We proceed with describing the algorithm of Cools et al.\ Its input, as indicated in the
previous subsection, are sequences $(d_0;x_1,\ldots,x_g)$ with $d_0\in\ZZ$ and $x_i\in\RR$
that encode $v_0$-reduced divisors according to the scheme
of Definition~\ref{def:seq}. Next, we define the objects which will constitute the algorithm's
output, and afterward we specify the algorithm itself. 

\begin{definition}
    Fix a positive integer $r$. Define the \emph{Weyl chamber} $\mathcal
	C\subset\ZZ^r$ to consist of those points $p$ for which
	$p(1)>p(2)>p(3)>\dots>p(r)>0$.   
	%
	Then an \emph{$r$-dimensional lingering lattice path} is a sequence
	of points $p_0,p_1,\ldots,p_g$ in the Weyl chamber such that:
	\begin{enumerate}
		\item $p_0=(d_0,d_0-1,d_0-2,\ldots,d_0-(r-1))$ for some positive integer $d_0$;
		\item For any $1\leq i\leq g$, we have that each step $p_i-p_{i-1}$ is either one
			of the standard basis vectors $e_i$ for $\ZZ^r$, the negative
			diagonal $(-1,-1,\ldots,-1)$, or zero;
	\end{enumerate}
	We denote the set of $r$-dimensional lingering lattice paths by $\LLP_r$, and
	the set of all lingering lattice paths by $\LLP$. Note that, for the
sake of brevity, our notion of lingering lattice path is more
restrictive than that in \cite{cools2010tropical} and captures only the objects
pertinent to our algorithmic reinterpretation of their results.

	An \emph{$r$-dimensional non-lingering lattice path} is a lingering lattice
	path in which:
	\begin{enumerate}
		\item No step ``lingers,'' in the sense that $p_i-p_{i-1}$ is never $0$;
		\item The total number of steps in the $(-1,-1,\dots, -1)$ direction, which we abbreviate as the 
		$r+1^\text{st}$ direction, equals
			the number of steps in the first direction.
        \item The integer $d_0$ as defined in the definition of lingering lattice path is equal to $r$, so that $p_0 = (r,r-1,\ldots,1)$.
	\end{enumerate}
	We denote the set of $r$-dimensional non-lingering lattice paths by $\NLLP_r$, and
	the set of all non-lingering lattice paths by $\NLLP$.
\end{definition}
\begin{remark}
\label{remark:pathvisualization}
It is convenient to visualize the sequences $(p_0,p_1,\ldots,p_g)\subset\ZZ^r$ as a collection of $r$ piecewise-linear
paths in $\RR^2$ defined by requiring that for each $r\geq j\geq 1$, the points $(0,p_0(j)),(1,p_1(j)),\ldots,(g,p_g(j))$ are
all cusps of the $j^\text{th}$ path. An example of this visualization is given in Figure \ref{fig:nonlingering}. Note that
because the points of lingering lattice paths are in the Weyl chamber, they are non-intersecting with the points $(0, p_0(j)), \dots , (g, p_g(j))$ above $((0,p_0(k)),\dots , (g, p_g(k))$ for $j < k$.
\end{remark}

\begin{figure}[htbp]
    \centering
    \includegraphics{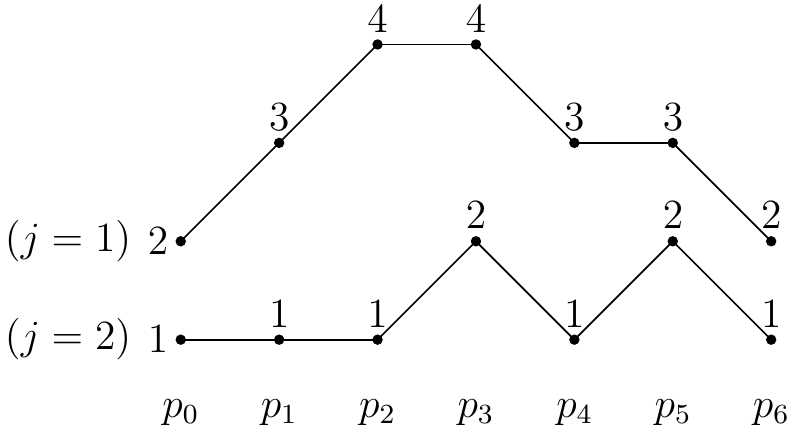}
    \caption{A $2$-dimensional non-lingering lattice path}
    \label{fig:nonlingering}
\end{figure}

Having described the output of the algorithm, we now specify the algorithm itself and collect its combinatorial
properties in the following theorem, which is a restatement of \cite[Theorem 1.4]{cools2010tropical} and its proof.

\begin{algorithm} \label{alg} If $c$ is a $v_0$-reduced divisor given by
	$(d_0;x_1,\ldots,x_g)$, then compute $\rho_r(c)=(p_0,p_1,\ldots,p_r)$
	according to the procedure:
	\begin{enumerate}
		\item $p_0=(d_0,d_0-1,\ldots,d_0-(r-1))$;
		\item $p_i-p_{i-1}=
		   \begin{cases}
			   (-1,\ldots,-1)&\text{if }x_i=0,\\ ~&~ \\
				e_j&
					\begin{aligned}
						&\text{if }x_i\equiv (p_{i-1}(j)+1)m_i\bmod \ell_i+m_i \\
						&\text{and both }p_{i-1},p_{i-1}+e_j\in\mathcal C,
					\end{aligned}\\ ~& ~ \\
			   0&\text{otherwise.}
		   \end{cases}$
	\end{enumerate}
\end{algorithm}

\begin{theorem}\label{th:cools}
	Suppose that for each positive integer $r$ and generic $\Gamma_g$, the above algorithm, Algorithm \ref{alg}, is well-defined and guaranteed to terminate, and returns a map
	$\displaystyle\rho_r\colon\Pic^{\leq2g-2}(\Gamma_g)\to\ZZ^r$  for generic $\Gamma_g$.  Then, identifying each element of $\Pic^{\leq 2g-2}(\Gamma_g)$ with its
    $v_0$-reduced divisor $c$, we have that $\rho_r(c)$ satisfies the
    following properties:
	\begin{enumerate}
		\item $c\in\Pic^{\leq2g-2}(\Gamma_g)$ is of rank at least $r$ if and
			only if $r\leq\max\{-1, d_0\}$ and $\rho_r(c)$ is a lingering
			lattice path in $\ZZ^r$, that is, $\rho_r(c)$ is in the Weyl chamber.  In particular, if $\rho_r(c)$ is a non-lingering lattice path, then $r$ must be at most the rank of $c$ since $d_0 =r $ so $r \leq \max\{-1, d_0\}$ clearly holds.
		\item if $\rho\colon\Pic^{\leq2g-2}(\Gamma_g)\to\LLP$ is defined by
			$\rho(c)=\rho_r(c)$ if $c$ has rank $r$, then there exists a map
			$\alpha\colon\NLLP\to\Pic(\Gamma_g)$ with $\rho\circ\alpha = 1$, whose
			image consists of all rank $r$ degree $d$ elements of
			$\Pic(\Gamma_g)$ such that $(g-d+r)(r+1)=g$.
	\end{enumerate}
\end{theorem}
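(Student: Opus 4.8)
The plan is to prove Theorem~\ref{th:cools} by following, and repackaging, the argument of Cools et al.\ \cite[Theorem 1.4]{cools2010tropical}, reorganized around the algorithm $\rho_r$. First I would establish that Algorithm~\ref{alg} is well-defined: the only genuine choice is in the middle case, so I must check that for generic $\Gamma_g$ at most one $j$ can satisfy $x_i \equiv (p_{i-1}(j)+1)m_i \bmod \ell_i + m_i$ together with $p_{i-1}, p_{i-1}+e_j \in \mathcal C$. This is exactly where genericity enters: if two indices $j < k$ both worked, then $(p_{i-1}(j)+1)m_i \equiv (p_{i-1}(k)+1)m_i \bmod \ell_i+m_i$, forcing $(p_{i-1}(j)-p_{i-1}(k))m_i$ to be a multiple of $\ell_i+m_i$; since $0 < p_{i-1}(k) < p_{i-1}(j) \le d_0 \le 2g-2$, this contradicts Definition~\ref{def:generic} much as in the remark following Lemma~\ref{lem:park}. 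Termination is immediate since the path has exactly $g$ steps. This gives the map $\rho_r$.

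The heart is part (1): linking ``$\rho_r(c)$ stays in the Weyl chamber'' to ``$r(c) \ge r$.'' I would induct on the loops $\gamma_1,\ldots,\gamma_g$, tracking not a single divisor but the family of divisors $c - e$ for $e$ effective of degree $\le r$; equivalently, following Cools et al., I would encode the ``winning'' positions as the coordinates of $p_i$. The key tool is Lemma~\ref{lem:park}: recentering $c$ from $v_{i-1}$-reduced to $v_i$-reduced form changes the number of chips on the distinguished vertex by $-1$, $0$, or $+1$, and these three outcomes correspond precisely to the negative-diagonal step, the lingering step, and the $e_j$ step of the algorithm (the case analysis on whether $x_i \equiv (k+1)m_i$ matches the congruence condition in Algorithm~\ref{alg}, with $k = p_{i-1}(j)$ playing the role of the chip count in the relevant ``virtual'' subtraction). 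The divisor $c$ fails to have rank $\ge r$ exactly when at some loop one of the $r$ parallel bookkeeping paths is forced to leave $\mathcal C$ — i.e.\ two coordinates collide or the smallest drops to $0$ — which is visibly the statement that $\rho_r(c)$ is a lingering lattice path iff rank $\ge r$; the side condition $r \le \max\{-1,d_0\}$ handles the degenerate starting configurations, and the final sentence of part (1) is then immediate, since for a non-lingering path $d_0 = r$ by definition.

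For part (2), I would first check $\rho$ is well-defined: if $c$ has rank exactly $r$ then by part (1) $\rho_r(c) \in \LLP_r$, and the ambiguity of which $\rho_{r'}$ to use is resolved by taking $r' = r(c)$. Then I would construct $\alpha\colon \NLLP \to \Pic(\Gamma_g)$ by running the recentering moves of Lemma~\ref{lem:park} \emph{in reverse}: a non-lingering path $P \in \NLLP_r$ prescribes, loop by loop, a value of $x_i$ (and starts from $d_0 = r$), and assembling these gives a $v_0$-reduced divisor $c = \alpha(P)$ with $\rho_r(c) = P$; that $\rho \circ \alpha = 1$ follows because the algorithm is the left inverse of this assembly by construction, and because for a non-lingering path part~(1) already forces $r(c) = r$ (one uses that $P \notin \LLP_{r+1}$ — here the count of first-direction steps equaling $(r+1)$st-direction steps is what obstructs extending to $r+1$ coordinates). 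Finally, for the image: degree is additive over the steps, $\deg(c) = d_0 + \#\{e_j\text{-steps}\} - r\cdot\#\{\text{diagonal steps}\}$, and combining this with $g = $ (total steps) $= \#e_j + \#\text{diag}$ (no lingering) and $\#e_1\text{-steps} = \#\text{diag-steps}$ yields, after arithmetic with $d = \deg(c)$ and $r = r(c)$, the Brill–Noether equality $(g-d+r)(r+1) = g$; conversely any class meeting this equality has $\rho$ non-lingering by a dimension/count argument, so lies in the image.

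I expect the main obstacle to be part (1): precisely pinning down the correspondence between the three cases of Lemma~\ref{lem:park} applied to the auxiliary subtracted divisors and the three cases of Algorithm~\ref{alg}, and verifying that ``leaving the Weyl chamber'' is genuinely equivalent to ``rank drops below $r$'' rather than merely implied in one direction — this requires the observation, essentially Dhar's burning / the $v_0$-reduced criterion, that a $v_0$-reduced divisor is effective iff $d_0 \ge 0$, propagated through all $g$ recenterings.
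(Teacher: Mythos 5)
The paper itself offers no proof of this theorem: it is introduced explicitly as a restatement of \cite[Theorem 1.4]{cools2010tropical} and of its proof, so the only meaningful comparison is with the argument of Cools et al., which your plan follows in outline. Within that outline there is a genuine gap at the crux, part (1). You assert that rank $\geq r$ fails ``exactly when'' one of the $r$ bookkeeping coordinates is forced out of the Weyl chamber, and call the equivalence ``visible''; but that biconditional \emph{is} the theorem. Nothing in the plan explains why $r$ integer coordinates certify that $c-e$ is equivalent to an effective divisor for \emph{every} effective $e$ of degree $r$ --- a family whose chips may sit at arbitrary points of the metric graph, not just at the vertices $v_i$ --- nor how, when the path exits the chamber, one extracts a specific witness $e$ with $|c-e|=\emptyset$. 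The argument of Cools et al.\ does real work here (a loop-by-loop analysis of reduced divisors of the subtracted configurations, together with a reduction of the class of $e$'s one must test); Lemma~\ref{lem:park}, which is all your plan invokes, only tracks $c$ itself (this is exactly Lemma~\ref{lem:chipshifting}) and says nothing about the divisors $c-e$. Until that step is supplied, part (1), and hence everything downstream, is unproven.

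Two further points. In part (2) your degree count is wrong: for $c=\alpha(P)$ one has $\deg(c)=d_0+\#\{e_j\text{-steps}\}$ (one chip on each loop with $x_i\neq 0$), not $d_0+\#\{e_j\text{-steps}\}-r\cdot\#\{\text{diagonal steps}\}$; with your formula the arithmetic gives $\deg(c)=r$, not the Brill--Noether equality. The correct count, combined with the Weyl-chamber constraint at $p_g$ which forces $\#e_1=\#e_2=\cdots=\#e_r=\#\{\text{diagonal steps}\}$, does yield $(g-d+r)(r+1)=g$. Likewise the converse inclusion of the image statement (every rank-$r$, degree-$d$ class with $(g-d+r)(r+1)=g$ has a non-lingering path) is only waved at as ``a dimension/count argument'' and needs the equality-case counting spelled out. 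Finally, in your well-definedness check the bound $p_{i-1}(j)\leq d_0$ is false in general (coordinates grow past $d_0$ under $e_j$-steps); what you need is $p_{i-1}(j)\leq p_{i-1}(1)\leq \deg(c)\leq 2g-2$, e.g.\ via Lemma~\ref{lem:chipshifting} --- though note that the theorem as stated simply hypothesizes well-definedness, so this part is not strictly required.
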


\begin{example}
    The non-lingering lattice path indicated in Figure \ref{fig:nonlingering} is the result of applying
    the above algorithm to the $v_0$-reduced divisor of Example \ref{ex:running}, which was given by
    $(2;3,4,2,0,2,0)$ on the graph $\Gamma_6$ with edge lengths $\ell_i=10$, $m_i=1$.
    
    Since $m_i=1$, applying the algorithm is particularly simple since $p_i-p_{i-1}=e_j$
    if $x_i\equiv p_{i-1}(j)+1\bmod(\ell_i+m_i)$, i.e. if $x_i$ is one more than $p_{i-1}(j)$ for
    some $j$, we increase the $j^\text{th}$ path. If $x_i$ is zero, we decrease all paths, and if
    $x_i$ is neither $0$ nor one more than $p_{i-1}(j)$ for some $j$, we linger.
    
    Thus, starting with $p_0=\binom{2}{1}$, the fact that $x_1=3$ and then $x_2=4$ give us that
    $p_1=\binom{3}{1}$ and then $p_2=\binom{4}{1}$. Next, the fact that $x_3=2$ and then $x_4=0$ give
    $p_3=\binom{4}{2}$ and then $p_4=\binom{3}{1}$. Finally, $x_5=2$ and $x_6=0$ give $p_5=\binom{3}{2}$ 
    and $p_6=\binom{2}{1}$.
\end{example}

With the above theorem, Cools et al.\ construct their bijection $\phi$ from the map $\alpha$ and a
bijection $\beta$ between rectangular standard Young tableaux and non-lingering lattice paths.

\begin{definition}
    Let $\SYT(n^m)$ be the set of rectangular $m\times n$ \emph{standard Young
		tableaux}, that is, $m\times n$ matrices $(a_{ij})$ whose entries are
	all the integers from $1$ to $mn$ such that $a_{i,j+1},a_{i+1,j}>a_{i,j}$.
	Let $\RSYT$ be the set of all rectangular standard Young tableaux. See \cite{stanley2008evacuation}
    for more details and background.
\end{definition}

\begin{proposition}[Proved in the proof of Theorem $1.4$ of \cite{cools2010tropical}] \label{prop:RST}
    Suppose that $P=(p_0,p_1,\ldots,p_g)$ is an $r$-dimensional non-lingering
	lattice path such that $g=(g-d+r)(r+1)$. We can fill a $(g-d+r)\times(r+1)$ standard Young tableau with
    the numbers from $1$ to $g$ as follows. Starting with
	$1$ and ending with $g$, place the number $i$ in the topmost free spot of
	the $j^\text{th}$ column if $p_i-p_{i-1}=e_j$, and in the $r+1^\text{st}$
	column if $p_i-p_{i-1}=(-1,\ldots,-1)$. Furthermore, all $(g-d+r)\times(r+1)$
	rectangular standard Young tableaux can be obtained in this way from one and
	only one non-lingering lattice path.
\end{proposition}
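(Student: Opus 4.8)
The plan is to recast everything in terms of the \emph{column counts} of the path. For a non-lingering lattice path $P=(p_0,\dots,p_g)$ and $1\le j\le r+1$, let $s_j(i)$ be the number of steps among $p_1-p_0,\dots,p_i-p_{i-1}$ that point in direction $j$, where direction $j$ for $j\le r$ means $+e_j$ and direction $r+1$ means $(-1,\dots,-1)$. Since $P$ does not linger, $s_1(i)+\dots+s_{r+1}(i)=i$, and $p_0=(r,r-1,\dots,1)$ gives $p_i(k)=(r+1-k)+s_k(i)-s_{r+1}(i)$ for $1\le k\le r$. Because these coordinates are integers, the condition $p_i\in\mathcal C$, i.e.\ $p_i(1)>\dots>p_i(r)>0$, is equivalent to the single chain $s_1(i)\ge s_2(i)\ge\dots\ge s_r(i)\ge s_{r+1}(i)$. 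This equivalence is the one step that carries real content; the rest is bookkeeping.

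First I would verify that the prescribed filling is well defined and yields a standard Young tableau of the stated shape. When $i$ is placed it goes into the topmost empty cell of the column $j$ in whose direction step $i$ points, and since that column then holds $s_j(i)$ entries (it held $s_j(i)-1$ beforehand), $i$ lands in row $s_j(i)$ of column $j$. The cell above it is filled by the greedy rule, and for $j\ge 2$ the cell immediately to its left, in row $s_j(i)$ of column $j-1$, is filled because $s_{j-1}(i-1)=s_{j-1}(i)\ge s_j(i)$ --- the equality since step $i$ does not touch column $j-1$, the inequality from the chain above. Hence the occupied cells always form a Young diagram and rows and columns strictly increase, so we obtain a standard Young tableau whose $j$-th column has height $a_j:=s_j(g)$, with $a_1\ge\dots\ge a_{r+1}$. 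Now the two non-lingering conditions enter: no lingering gives $a_1+\dots+a_{r+1}=g$, and equal numbers of steps in directions $1$ and $r+1$ give $a_1=a_{r+1}$; with the monotonicity this forces $a_1=\dots=a_{r+1}=g/(r+1)$, and $g=(g-d+r)(r+1)$ identifies this common value with $g-d+r$. So the shape is exactly the $(g-d+r)\times(r+1)$ rectangle.

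For the ``one and only one'' part I would exhibit the inverse. Given a standard Young tableau $T$ of shape $(g-d+r)\times(r+1)$, let step $i$ point in the direction of the column of $T$ containing $i$, put $d_0:=r$ so $p_0=(r,r-1,\dots,1)$, and set $p_i:=p_{i-1}+(\text{step }i)$. Now $s_j(i)$ is the number of entries $\le i$ in the $j$-th column of $T$; these entries occupy rows $1,\dots,s_j(i)$ because columns of $T$ increase, and because rows of $T$ increase we have $T(s,j-1)<T(s,j)$ for every row $s$, hence $\{s:T(s,j)\le i\}\subseteq\{s:T(s,j-1)\le i\}$ and so $s_{j-1}(i)\ge s_j(i)$ for all $i$ and all $j\ge 2$. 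By the reformulation every $p_i$ then lies in $\mathcal C$; the path does not linger by construction, $d_0=r$, and each column of $T$ has $g/(r+1)$ cells so directions $1$ and $r+1$ are used equally often --- so this is a non-lingering lattice path. Finally I would check the two constructions are mutually inverse, which is immediate: in both directions ``direction of step $i$'' corresponds to ``column of the entry $i$'' and ``row of $i$ within that column'' equals $s_j(i)$, which is exactly where the forward filling puts $i$, while the base point is pinned down by $d_0=r$ on one side and by construction on the other.

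I do not expect a real obstacle here. The only genuinely non-obvious point is recognizing the equivalence between Weyl-chamber membership of the $p_i$ and the monotonicity $s_1(i)\ge\dots\ge s_{r+1}(i)$ of the column counts, after which the role of the two non-lingering conditions --- upgrading an a priori arbitrary Young diagram to the prescribed rectangle --- falls out. The rest is keeping the index conventions straight, in particular that $s_j(i)$ (and not $s_j(i-1)$) records the row of the entry $i$ just placed.
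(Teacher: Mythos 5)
Your proof is correct: the key equivalence between $p_i$ lying in the Weyl chamber and the weakly decreasing column counts $s_1(i)\ge\cdots\ge s_{r+1}(i)$ is exactly right, and the two non-lingering conditions do force the rectangular shape as you say. The paper itself does not reprove this proposition (it is quoted from the proof of Theorem 1.4 of Cools--Draisma--Payne--Robeva), and your argument is essentially the standard ballot-sequence correspondence used there, so it matches the intended proof while supplying the details in full.
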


\begin{remark}
In the case $r=0$, Proposition \ref{prop:RST} still holds.  In this case, the equality $g=(g-d+r)(r+1)$ implies that $d=0$ and for each positive integer $g$, the zero divisor is indeed the unique effective divisor of degree and rank zero.  This corresponds to the empty path and to the unique $g \times 1$ standard Young tableau.  
\end{remark}

\begin{example}\label{ex:tab}
    Starting with the $r=2$-dimensional non-lingering lattice path of Figure~\ref{fig:nonlingering},
    we construct the standard Young tableaux as follows. From $p_0=\binom{2}{1}$ to $p_1=\binom{3}{1}$
    we have an increase in the first path so we put a $1$ in the first column. Going to $p_2=\binom{4}{1}$
    and then to $p_3=\binom{4}{2}$ we have increases in the first and then the second path, which means we
    put $2$ in the first column and $3$ in the second. Next we have a decrease to $p_4=\binom{3}{1}$ which
    means we put $4$ in column $r+1=3$. Continuing, we obtain the tableau:
    
    \[
    T = 
    \begin{tabular}{|r|r|r|}
        \hline
		1 & 3 & 4 \\
		\hline
		2 & 5 & 6 \\
		\hline
	\end{tabular}
    \]
\end{example}

\begin{definition}
    \label{def:phi}
	Let $\beta\colon\RSYT\to\NLLP$ be the inverse of the
	above-described bijection.
    
    Define the injective map $\phi\colon\RSYT\to\Pic(\Gamma_g)$ by
	$\phi=\alpha\circ\beta\colon\RSYT\to\Pic(\Gamma_g)$. Then the map $\phi$ bijects rectangular standard Young tableaux
    onto rank $r$ degree $d$ $v_0$-reduced divisors such that $(g-d+r)(r+1)=g$. 
\begin{figure}[h]\label{fig:phi}
    \[
        \xymatrix{
    		\Pic(\Gamma_g)&
				\ar@{_(->}[l]_-\alpha
				\NLLP
					&
					\ar@{_(->}[l]_-\beta
					\ar@{.>}@/_1.5pc/[ll]_-\phi
					\RSYT
          }
    \]
\caption{The map $\phi$}
\end{figure}
\end{definition}

\section{Evacuation and Reflection}\label{sec:res}
In this section we state and prove our original result
regarding how evacuation on standard Young tableaux acts on $v_0$-reduced
divisors on the generic graphs $\Gamma_g$ under the bijection $\phi$ of Definition~\ref{def:phi}
discovered by Cools et al.\ in \cite{cools2010tropical}. 

\begin{definition}
    We consider an involution $\ev\colon\RSYT\to\RSYT$, called
    \emph{evacuation}, sending $T$ in $\RSYT$ to $\ev(T)=(b_{ij})$ where
	$b_{i,j}=(mn+1-a_{m+1-i,n+1-j})$. Evidently, evacuation preserves the
	dimensions of tableaux.
\end{definition}
Geometrically, $\ev(T)$ can be pictured as rotating the rectangular standard
Young tableaux $180^\circ$ and flipping the entries according to the rule $i\to
mn+1-i$. For more details on evacuation of tableaux, introduced by Sch\"utzenberger \cite{schutz}, we direct the interested reader to
the wonderful survey in \cite{stanley2008evacuation} by Richard Stanley. 

\begin{definition}
	Suppose that $\Gamma_g$ is as in Definition~$\ref{def:gamma}$, i.e. the
	concatenation of $g$ circles $\gamma_i$ of circumferences $\ell_i+m_i$,
	along with designated vertices $\{v_i\}_{0\leq i\leq g}$ such that
	$v_{i-1},v_i\in\gamma_i$ and $\ell_i$ and $m_i$ are the lengths of the two
	arcs joining $v_{i-1}$ and $v_i$ in $\gamma_i$. Define the \emph{reflection}
    $\Gamma'_g$ of $\Gamma_g$ to be the same compact metric graph as $\Gamma_g$,
    but with designated vertices $v'_i=v_{g+1-i}$. Note that this gives $\ell'_{g+1-i}=\ell_i$
    and $m'_{g+1-i}=m_i$.

	Identifying $\Pic(\Gamma_g)$ with the set of $v_0$-reduced divisors, we define the \emph{reflection}
	$\sigma\colon\Pic(\Gamma_g)\to\Pic(\Gamma'_g)$ to be given by the rule that
	if $c$ is a $v_0$-reduced divisor on $\Gamma_g$, then $\sigma(c)$ is the
    $v_g=v'_0$-reduced divisor equivalent to $c$ considered as a divisor on $\Gamma'_g$.
\end{definition}

\begin{theorem}\label{th:result}
	Suppose that $\Gamma_g$ is generic and suppose $(g-d+r)(r+1) =g$. Let $\phi=\alpha\circ\beta\colon\RSYT\to\Pic(\Gamma_g)$
    be the map of Definition~\ref{def:phi} due to Cools et al.\ that bijects $(g-d+r)\times (r+1)$ rectangular standard Young tableaux
    to rank $r$ degree $d$ $v_0$-reduced divisors on $\Gamma_g$.
    Let $\phi'=\alpha'\circ\beta\colon\RSYT\to\Pic(\Gamma'_g)$ be the analogous map
    from rectangular standard Young tableaux to $v_0$-reduced divisors on the reflection $\Gamma'_g$. 
    Then evacuation on tableaux corresponds to reflection in the sense that the
	following diagram commutes:
    \[
    	\xymatrix{
			\Pic(\Gamma_g)
			\ar@{->}[d]^\sigma
				&
				\ar@{_(->}[l]_-\alpha
				\NLLP
					&
					\ar@{_(->}[l]_-\beta
					\ar@{.>}@/_1.5pc/[ll]_-\phi
					\RSYT
					\ar@{->}[d]^\ev &\\
			\Pic(\Gamma'_g)
				&
				\ar@{_(->}[l]_-{\alpha'}
				\NLLP
					&
					\ar@{_(->}[l]_-\beta
					\ar@{.>}@/^1.5pc/[ll]_-{\phi'}
					\RSYT&
		}
	\]
\end{theorem}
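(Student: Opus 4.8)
The plan is to show that the composite $\sigma \circ \phi$ and $\phi' \circ \ev$ agree by chasing each of the three constituent maps $\beta$, $\alpha$, $\alpha'$ through the definitions. Since $\phi = \alpha \circ \beta$, it suffices to produce a map $\mathrm{rev}\colon \NLLP \to \NLLP$ making both squares of the diagram commute, i.e. $\beta \circ \ev = \mathrm{rev} \circ \beta$ (a purely combinatorial statement about tableaux and lattice paths) and $\sigma \circ \alpha = \alpha' \circ \mathrm{rev}$ (a statement about divisors). The natural candidate for $\mathrm{rev}$ is the ``reverse-and-dualize'' operation on a non-lingering lattice path $P = (p_0, \dots, p_g)$: read the steps backwards, and swap the role of the first direction $e_1$ with the $(r+1)^{\text{st}}$ direction $(-1,\dots,-1)$, with the remaining basis directions $e_2, \dots, e_r$ relabeled by $e_j \mapsto e_{r+2-j}$. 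One checks this lands back in $\NLLP_r$: reversing preserves non-lingering-ness, the count of first-direction steps equals the count of $(r+1)^{\text{st}}$-direction steps in both $P$ and $\mathrm{rev}(P)$, and the Weyl-chamber condition at each intermediate point follows because reflecting a strictly decreasing tuple about its ``center'' and negating gives again a strictly decreasing tuple (this is where one uses $d_0 = r$, so $p_0 = (r, r-1, \dots, 1)$ is symmetric under this operation).

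Next I would verify $\beta \circ \ev = \mathrm{rev} \circ \beta$. Using Proposition~\ref{prop:RST}, a tableau $T$ is built from $\beta(T) = P$ by placing $i$ in the topmost free cell of column $j$ when $p_i - p_{i-1} = e_j$ (with column $r+1$ for the negative-diagonal step). Evacuation $\ev(T)$ rotates $T$ by $180^\circ$ and applies $i \mapsto mn + 1 - i = g + 1 - i$; here $m = g-d+r$ and $n = r+1$. The $180^\circ$ rotation sends column $j$ to column $n + 1 - j = r + 2 - j$ (so column $1 \leftrightarrow$ column $r+1$, matching the $e_1 \leftrightarrow (-1,\dots,-1)$ swap, and $e_j \mapsto e_{r+2-j}$ for the middle columns), and reversing the entry values $i \mapsto g+1-i$ exactly reverses the order in which cells are filled, i.e. reverses the sequence of steps. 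The only subtlety is that ``topmost free cell of column $j$'' in $T$ corresponds to ``bottommost free cell of the rotated column'' in $\ev(T)$, but since within a fixed column the steps of a non-lingering path go in strictly increasing order of $i$ and fill cells top-to-bottom, the rotation is compatible — this is the standard fact that evacuation of a rectangle is the composite of rotation and complementation on the jeu-de-taquin description, and it should reduce to a short bookkeeping argument here.

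The main obstacle is the divisor-side identity $\sigma \circ \alpha = \alpha' \circ \mathrm{rev}$, which is where the geometry of $\Gamma_g$ enters. The map $\alpha$ recovers a $v_0$-reduced divisor from a lattice path by inverting Algorithm~\ref{alg}, whose heart is the recentering Lemma~\ref{lem:park}: moving from a $v_{i-1}$-reduced to a $v_i$-reduced picture, a step $e_j$ occurs exactly when $x_i \equiv (p_{i-1}(j)+1)m_i \bmod (\ell_i + m_i)$. For the reflection $\Gamma_g'$, recentering runs in the opposite direction (from $v_i' = v_{g-i}$ toward $v_0' = v_g$), so I would prove that recentering $\Gamma_g$ left-to-right and recentering $\Gamma_g'$ right-to-left are inverse procedures on each loop $\gamma_i$, using the symmetry $\ell'_{g+1-i} = \ell_i$, $m'_{g+1-i} = m_i$ together with case~(2) of Lemma~\ref{lem:park}, which expresses the new chip position $k m_i - x_i$ as an affine reflection of the old one. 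Tracking how the chip-count at the base vertex evolves under this reversed recentering should reproduce exactly the $e_1 \leftrightarrow (-1,\dots,-1)$ swap (a step that \emph{adds} a chip at the left vertex becomes, read from the right, a step that \emph{removes} one), and the index shuffle $e_j \mapsto e_{r+2-j}$ should fall out of the same reflection applied componentwise to $p_{i-1} \in \mathcal{C}$. Assembling these per-loop identities, together with the base case $p_0 \mapsto p_g$ recording that a $v_0$-reduced divisor of ``height'' $d_0 = r$ has a $v_g$-reduced representative of the mirrored height, completes the verification that the outer square commutes; combining with the already-established inner square gives $\sigma \circ \phi = \phi' \circ \ev$, as desired.
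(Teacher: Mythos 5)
Your proposal follows essentially the same route as the paper's proof: the paper also handles the tableau side by observing that evacuation reverses the lattice path while exchanging direction $j$ with $r+2-j$ (this is the computation of $\beta(\ev(T))$ inside Proposition~\ref{prop:phiprimeev}), and handles the divisor side by successively recentering with Lemma~\ref{lem:park} while tracking the chip count at the base vertex (Lemma~\ref{lem:chipshifting}, i.e.\ $c_i(v_i)=p_i(1)$), which is exactly the content of your ``should reproduce'' step and is recorded there as the formula $\underline x'_{g+1-i}=\max\{p_{i-1}(1)-\underline x_i-1,0\}$ of Proposition~\ref{prop:sigmaalpha}. The only differences are packaging (you factor both composites through an explicit reverse-and-relabel involution on $\NLLP$ and compare paths, while the paper compares closed-form formulas for $\sigma\circ\phi$ and $\phi'\circ\ev$ in terms of tableau statistics) and one imprecision you would need to fix when carrying out your second square: the point of the reversed path satisfies $p'_{g-i}(k)=p_i(1)-p_i(r+2-k)$ (with the convention $p_i(r+1):=0$), so the index shuffle comes from a reflection anchored at the first coordinate of $p_i$, not from a naive componentwise reflection of $p_{i-1}$.
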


The remainder of this section is devoted to the proof this theorem. In Subsection~\ref{subsec:1}, we introduce our useful notation
for elements of $\Pic(\Gamma_g)$ in the image of $\phi$, and give formulas for $\alpha$, $\beta$, $\phi$, and $\phi'\circ\ev$.

Once the notation is laid out, the heart of this proof is Proposition
\ref{prop:sigmaalpha}, where we characterize non-lingering lattice paths $p$ based on the sequence of differences $p_i - p_{i-1}$. These differences are characterized by three cases: a change in the coordinate $j$ where $j=0$, $0<j<r$, or $j = r$. As we will show, this
characterization is symmetric in $j$ and $r-j$, so that reading the path $p'$ obtained from reading $c$ backwards must change in the $r-j$ direction, exactly as evacuation would suggest. 

In Subsection~\ref{subsec:2} we give a formula for $\sigma\circ\phi$, which we show is the same as the formula for $\phi'\circ\ev$, thus proving Theorem~\ref{th:result}.

\subsection{Notation, and formulas for $\phi=\alpha\circ\beta$ and $\phi'\circ\ev$}\label{subsec:1}

\begin{proposition}[Formula for $\alpha$]
	If $P$ is an $r$-dimensional non-lingering lattice path, then
	$\alpha(P)=(r;x_1,\ldots,x_g)$ can be computed via the formula:
	\begin{align*}	
		x_i&=\begin{cases}
			0&\text{if }p_i-p_{i-1}=(-1,-1,\ldots,-1)\\
			(p_{i-1}(j)+1)m_i&\text{if }p_i-p_{i-1}=e_j
			\end{cases}
	\end{align*}
    \begin{proof}
    Immediate from the definition of $\alpha$ in Theorem~\ref{th:cools} as a map that inverts
    $\rho$ and the definition of $\rho$.
    \end{proof}
\end{proposition}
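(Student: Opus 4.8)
The plan is to extract the formula directly from the characterization of $\alpha$ in Theorem~\ref{th:cools}: $\alpha$ is a right inverse of $\rho$, and $\rho$ is computed by running Algorithm~\ref{alg}. So fix an $r$-dimensional non-lingering lattice path $P=(p_0,\ldots,p_g)$ and write $\alpha(P)=(d_0;y_1,\ldots,y_g)$ in the encoding of Definition~\ref{def:seq}; the goal is to show $d_0=r$ and that $y_i$ agrees, modulo $\ell_i+m_i$, with the stated formula. By Theorem~\ref{th:cools}(2) we have $\rho(\alpha(P))=P$ and $\alpha(P)$ has rank $r$, so in fact $\rho_r(\alpha(P))=P$; equivalently, running Algorithm~\ref{alg} on the sequence $(d_0;y_1,\ldots,y_g)$ returns exactly the path $P$. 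It then remains only to read off $d_0$ and the $y_i$ from this fact.

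First I would pin down $d_0$: step (1) of Algorithm~\ref{alg} sets $p_0=(d_0,d_0-1,\ldots,d_0-(r-1))$, while property (3) in the definition of a non-lingering lattice path records $p_0=(r,r-1,\ldots,1)$; comparing first entries gives $d_0=r$, as claimed. Next, for each $i$ I would inspect step $i$ of the algorithm, which produces the difference $p_i-p_{i-1}$ from $y_i$ together with the already-reconstructed point $p_{i-1}$. Since $P$ is non-lingering, this difference is never $0$, so one of two cases occurs. If $p_i-p_{i-1}=(-1,\ldots,-1)$, then — since neither the $e_j$-branch nor the $0$-branch of Algorithm~\ref{alg} can ever output the negative diagonal — the algorithm must have taken its first branch, which is triggered precisely by $y_i=0$; hence $y_i=0$, matching the first case of the formula. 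If instead $p_i-p_{i-1}=e_j$, then the algorithm took its second branch for that $j$, and that branch is taken only when $y_i\equiv(p_{i-1}(j)+1)m_i\bmod(\ell_i+m_i)$; hence $y_i\equiv(p_{i-1}(j)+1)m_i$, matching the second case. Since Definition~\ref{def:seq} pins down each $x_i$ only modulo $\ell_i+m_i$, this identifies $\alpha(P)=(r;x_1,\ldots,x_g)$ with the $x_i$ exactly as in the proposition.

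I do not expect a genuine obstacle; the only point requiring care is to argue in the right direction. Plugging the claimed sequence $(r;x_1,\ldots,x_g)$ into Algorithm~\ref{alg} and verifying that it returns $P$ would force us to check that every intermediate $p_{i-1}$ and $p_{i-1}+e_j$ lies in the Weyl chamber and that no two distinct basis directions $e_j,e_{j'}$ are simultaneously eligible at a step — the latter being exactly where genericity of $\Gamma_g$ would enter. Starting instead from the already-known output $P=\rho_r(\alpha(P))$ and running the algorithm's case analysis backwards at each step avoids all of this: each step constrains $y_i$ in precisely the way the formula predicts, with no feasibility check or appeal to genericity needed.
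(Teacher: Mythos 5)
Your proposal is correct and follows essentially the same route as the paper, whose proof is simply that the formula is immediate from $\alpha$ being a right inverse of $\rho$ (Theorem~\ref{th:cools}) together with the case analysis in Algorithm~\ref{alg}; you merely spell out the details (reading $d_0=r$ from $p_0$ and the congruences from the algorithm's branches). No gap to report.
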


\begin{notation}
	If $c$ is a $v_0$-reduced divisor on $\Gamma_g$ such that
	$(g-d+r)(r+1)=g$, then we can describe $c$ by a sequence
	$(d_0;\underline x_0, \underline x_1,\ldots,\underline x_g)$ where
	$\underline x_i m_i$ is the counter-clockwise distance of the single chip
	on $\gamma_i$ from $v_i$, if such a chip exists, and $\underline x_i=0$ if
	the chip does not exist. Again, this description is unique up to the modular
	equivalences $\underline x_i\bmod((\ell_i+m_i)/m_i)$. Note that if $m_i = 1$,
    we get $\underline x_i\equiv x_i- 1\bmod(\ell_i+m_i)$ since $x_i$ measures the
    counter-clockwise distance modulo $l_i + 1$ to the chip, starting from $v_{i-1}$,
    while $\underline x_i$ measures the distance starting from $v_{i}$ instead.

	Using the $\underline x_i$ notation, we can thus rewrite our formula for $\alpha$ from above as:
		\begin{align}\label{eq:alpha}
			\underline x_i&=
				\begin{cases}
					0&\text{if }p_i-p_{i-1}=(-1,-1,\ldots,-1)\\
					p_{i-1}(j)&\text{if }p_i-p_{i-1}=e_j
				\end{cases}
		\end{align}	
\end{notation}

\begin{example}\label{ex:notation}
    Consider once again the $v_0$-reduced divisor $(2;3,4,2,0,2,0)$ on $\Gamma_6$ with $\ell_i=10$, $m_i=1$ from Example \ref{ex:running}.
    Since the algorithm gives a non-lingering lattice path, we know that it satisfies $(g-d+r)(r+1)=g$.
    In particular, it is of degree $6$ and rank $2$, while the genus is $6$.
    
    Hence in the new notation its
    sequence is $(2;\underline 2,\underline 3,\underline 1,\underline 0,\underline 1,\underline 0)$.
\end{example}

\begin{remark}
Even though the images of $\alpha$ and $\alpha'$ are usually different, in particular they map to  divisors on $\Gamma_g$ and $\Gamma'_g$ respectively, the images of $\phi$ and $\phi'$ agree as tuples $(r;\underline x_1,\ldots\underline x_r)$.  Technically the tuples in the image of $\phi$ is defined modulo $(\ell_i+m_i)/m_i$ and the image of $\phi'$ is defined modulo  $(\ell_{g+1-i}+m_{g+1-i})/m_{g+1-i}$, but since each $\ell_i+m_i$ is assumed to be greater than $2g-2$, if we assume each $m_i = 1$, then we can take the distances to satisfy $0\leq \underline{x}_i\leq (\ell_i+1)$ so that the moduli never come into play.  For other choices of $m_i$, the distance chosen might need to be larger than $(\ell_i+m_i)/m_i$, i.e. wrap around the loop $\gamma_i$, to agree with the values from (\ref{eq:alpha}).  For convenience, we will assume distances are chosen to be multiples of $m_i$ and agreeing with the values from (\ref{eq:alpha}) throughout the rest of this paper.
\end{remark}



\begin{proposition}[Formula for $\beta$]\label{prop:path}
	Suppose that $T$ is a rectangular $(g-d+r)\times(r+1)$ standard Young
	tableau and that $\beta(T)=P=(p_0,\ldots,p_g)$. Then for any $i$, $p_i$ can
	be computed according to the formula:
	\begin{align}\label{eq:beta}
		p_i &= p_0+\left(\begin{array}{c}
					l_1-l_{r+1}\\
					l_2-l_{r+1}\\
					\dots\\
					l_r-l_{r+1}\end{array}\right)
			= \left(\begin{array}{c}
				r+l_1-l_{r+1}\\
				r-1+l_2-l_{r+1}\\
				\dots\\
				1+l_r-l_{r+1}\end{array}\right)
	\end{align}
	where $l_s$ is the number of cells in the $s^\text{th}$ column of $T$ whose
	entries are at most $i$.
	\begin{proof}
		In the bijection of Cools et al.\ (see Proposition \ref{prop:RST}) 
		between the non-lingering lattice paths and the standard
		Young tableaux, a number $k\leq i$ is placed in column $j<r+1$ when
		$p_k-p_{k-1}=e_j$, i.e., when there has been an increase in the
		$j^\text{th}$ direction. The number $l_j$ hence counts the number of
		increases that have occurred in the $j^\text{th}$ direction by the
		$i^\text{th}$ step.
		
		On the other hand, a number $k\leq i$ is placed in column $r+1$
		when $p_k-p_{k-1}=(-1,-1,\ldots,-1)$, i.e., when there has been a decrease
		along all directions. Hence, $l_{r+1}$ counts the number of decreases
		that have occurred by the $i^\text{th}$ step. Knowing that we start with
		$p_0=(r,r-1,\ldots,1)$, it follows that $p_i(j)=p_0(j)+l_j-l_{r+1}$ and
		hence the proposition follows.
	\end{proof}
\end{proposition}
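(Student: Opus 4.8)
The plan is to read the formula straight off the filling procedure of Proposition~\ref{prop:RST}, using the crucial feature of a non-lingering lattice path that no step lingers: each of the $g$ steps $p_k-p_{k-1}$ is therefore uniquely tagged by a \emph{direction}, namely one of $e_1,\ldots,e_r$, or else the negative diagonal $(-1,\ldots,-1)$, which we will regard as the $(r+1)^{\text{st}}$ direction. First I would record the telescoping identity $p_i = p_0 + \sum_{k=1}^{i}(p_k-p_{k-1})$ and examine a fixed coordinate $j\in\{1,\ldots,r\}$: the $k^{\text{th}}$ summand contributes $+1$ to that coordinate if $p_k-p_{k-1}=e_j$, contributes $-1$ if $p_k-p_{k-1}=(-1,\ldots,-1)$, and contributes $0$ in the only remaining case, $p_k-p_{k-1}=e_{j'}$ with $j'\ne j$ (there is no ``zero'' step to account for, precisely because $P$ is non-lingering). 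Writing $a_s(i)$ for the number of steps among the first $i$ whose direction is $s$, this gives $p_i(j)=p_0(j)+a_j(i)-a_{r+1}(i)$.

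Next I would identify $a_s(i)$ with $l_s$. By Proposition~\ref{prop:RST}, the entry $k$ of the tableau $T=\beta^{-1}(P)$ is placed in column $j<r+1$ exactly when $p_k-p_{k-1}=e_j$ and in column $r+1$ exactly when $p_k-p_{k-1}=(-1,\ldots,-1)$; since the entries $1,2,\ldots,g$ are inserted in increasing order and each always occupies the topmost free cell of its column, the cells of column $s$ carrying an entry at most $i$ are exactly the top $a_s(i)$ cells of that column. Hence $l_s=a_s(i)$ for every $s$, so $p_i(j)=p_0(j)+l_j-l_{r+1}$. Finally I would substitute $p_0=(r,r-1,\ldots,1)$, which is forced because a non-lingering path has $d_0=r$, to obtain $p_i(j)=(r-j+1)+l_j-l_{r+1}$, which is precisely the two displayed expressions in (\ref{eq:beta}).

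I do not expect a genuine obstacle: the statement is a bookkeeping translation between two descriptions of the same combinatorial object. The one point deserving a moment's care is the identification $l_s=a_s(i)$ --- that counting cells of column $s$ with small entries agrees with counting steps of direction $s$ among the first $i$ --- which rests on the fact that the filling in Proposition~\ref{prop:RST} processes the steps in order and never returns to a previously filled cell. Once that is granted, the rest is the telescoping sum and the substitution $p_0=(r,r-1,\ldots,1)$.
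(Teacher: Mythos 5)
Your argument is correct and is essentially the paper's own proof: both identify, via Proposition~\ref{prop:RST}, the count $l_s$ of entries at most $i$ in column $s$ with the number of steps in the $s^{\text{th}}$ direction among the first $i$ steps, and then read off $p_i(j)=p_0(j)+l_j-l_{r+1}$ with $p_0=(r,r-1,\ldots,1)$. Your explicit telescoping sum and the remark that non-lingering rules out zero steps just spell out what the paper leaves implicit.
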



\begin{notation}
	If $T$ is a rectangular $m \times n$ standard Young tableau and $1 \leq i \leq mn$, then we let 
\begin{itemize}
\item[(i)] 	$l_r(i,T)$ denote the index of the row of $T$ (from top to bottom) containing $i$, 

\item[(ii)] $l_c(i,T)$ denote the index of the column (from left to right) containing $i$,

\item[(iii)] $L_{first}(i,T)$ denote the number of cells in the first 
        column whose entries are strictly less than $i$, and 
        
\item[(iv)] $L_{last}(i,T)$ 
        denote the number of cells in the last 
        column whose entries are strictly less than $i$.	
\end{itemize}
\end{notation}

Putting the formulas for $\alpha$ and $\beta$ together, we obtain a formula for
$\phi=\alpha\circ\beta$.

%
%

\begin{proposition}[Formula for $\phi$]\label{prop:x}
	Suppose that $T\in\SYT((r+1)^{g-d+r})$, and that
	$\phi(T)=\alpha\circ\beta(T)=c$ is described by $(r;\underline
	x_1,\ldots,\underline x_g)$.
	
	Then the $\underline x_i$'s can be computed according to the formula:
		\begin{align}\label{eq:phi}
			\underline x_i&= r + l_r(i,T) - l_c(i,T) - L_{last}(i,T)
		\end{align}
	using the above notation.
	\begin{proof}
		Let $\beta(T)=P=(p_0,\ldots,p_g)$ so that $\alpha(P)=c$.
		If the number $i$ is in the $l = l_c(i,T)^\text{th}$ column of $T$ for $1\leq
		l<r+1$, then $p_i-p_{i-1}=e_l$ and hence formula ($\ref{eq:alpha}$) for
		$\alpha$ gives us $\underline x_i=p_{i-1}(l)$, while formula
		($\ref{eq:beta}$) for $\beta$ gives us 
		$$p_{i-1}(l) 
		= (r+1-l) + (l_r(i,T)-1) - L_{last}(i,T)),$$ yielding formula (\ref{eq:phi}).
                Note that since $l_r(i,T)$ is the index of the row 
                containing $i$, it follows that $l_r(i,T) - 1$ is the number of 
                entries in column $l$ that are at most $i-1$. 		

		Otherwise, if the number $i$ is in the $r+1^\text{st}$ column of $T$,
		then $p_i-p_{i-1}=(-1,\ldots,-1)$.  
		We obtain $\underline x_i = 0$ in this case, and 
		since $l_c(i,T) = r+1$ and $l_r(i,T) = L_{last}(i,T)+1$, the proposed formula for $\phi$
		still holds.
	\end{proof}
\end{proposition}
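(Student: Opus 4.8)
The plan is to compose the two formulas already in hand: formula (\ref{eq:alpha}) for $\alpha$ and formula (\ref{eq:beta}) from Proposition~\ref{prop:path} for $\beta$. Writing $P=\beta(T)=(p_0,\ldots,p_g)$ so that $c=\alpha(P)$, the entry $i$ of $T$ records the $i^{\text{th}}$ step $p_i-p_{i-1}$ via the bijection of Proposition~\ref{prop:RST}: if $i$ sits in column $l_c(i,T)\le r$ then $p_i-p_{i-1}=e_{l_c(i,T)}$, and if $i$ sits in column $r+1$ then $p_i-p_{i-1}=(-1,\ldots,-1)$. So I would split into these two cases.

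In the first case, (\ref{eq:alpha}) gives $\underline x_i=p_{i-1}(l)$ with $l=l_c(i,T)$, and I would then apply (\ref{eq:beta}) at index $i-1$, which expresses $p_{i-1}(l)$ as $p_0(l)$ plus the number of cells of column $l$ with entry at most $i-1$, minus the number of cells of column $r+1$ with entry at most $i-1$, where $p_0(l)=r+1-l$. The only genuine bookkeeping is to identify these two counts in the stated notation: by standardness of $T$, the cells of column $l$ with entry strictly less than $i$ are precisely those lying above the cell containing $i$, of which there are $l_r(i,T)-1$; and the cells of the last column with entry strictly less than $i$ number $L_{last}(i,T)$ by definition. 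Substituting yields $\underline x_i=(r+1-l)+(l_r(i,T)-1)-L_{last}(i,T)$, which rearranges to exactly (\ref{eq:phi}).

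In the second case $p_i-p_{i-1}=(-1,\ldots,-1)$, so (\ref{eq:alpha}) gives $\underline x_i=0$, and it only remains to confirm that the right-hand side of (\ref{eq:phi}) also vanishes. Here $l_c(i,T)=r+1$, and since the cells of column $r+1$ above $i$ are exactly those with smaller entries, $l_r(i,T)=L_{last}(i,T)+1$; plugging in gives $r+(L_{last}(i,T)+1)-(r+1)-L_{last}(i,T)=0$, as needed.

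I expect no serious obstacle here; the one point that must be handled carefully --- and the reason the row index $l_r(i,T)$ enters with a $-1$ --- is that (\ref{eq:beta}) has to be applied at step $i-1$ rather than step $i$, so the relevant column tallies count entries \emph{strictly} less than $i$, which is precisely what $l_r(i,T)-1$ and $L_{last}(i,T)$ record.
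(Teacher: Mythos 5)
Your proposal is correct and follows essentially the same route as the paper's proof: compose (\ref{eq:alpha}) with (\ref{eq:beta}) applied at step $i-1$, handle the column-$l\le r$ and column-$(r+1)$ cases separately, and identify $l_r(i,T)-1$ and $L_{last}(i,T)$ as the strict counts in column $l_c(i,T)$ and the last column. The bookkeeping point you flag (that (\ref{eq:beta}) must be read at index $i-1$, so the tallies count entries strictly less than $i$) is exactly the observation the paper's proof makes as well.
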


%
%
%

\begin{example}
    Consider the tableau of Example~\ref{ex:tab}
    
    \[
    T=
    \begin{tabular}{|r|r|r|}
        \hline
    	1 & 3 & 4 \\
		\hline
		2 & 5 & 6 \\
		\hline
	\end{tabular}.
    \]
    
    Using the formula for $\phi$ from Proposition~\ref{prop:x}, we obtain the sequence $\phi(T)=(2;\underline 2,\underline 3,\underline 1,\underline 0,\underline 1,\underline 0)$
    agreeing with Example~\ref{ex:notation}. To compute $\underline x_5$, for example, we see that $i=5$ is in row $2 = l_r(5,T)$ and column $2=l_c(5,T)$.  Also, the number of cells in the last columns that are strictly less than $i=5$ is $L_{last}(5,T) = 1$.
    It follows that $\underline x_5=r + l_r(5,T) - l_c(5,T) - L_{last}(5,T) = 2 + 2 - 2 - 1 = 1$. 
\end{example}

Next, we obtain the formula for $\phi'\circ\ev$. 

\begin{proposition}[Formula for $\phi'\circ\ev$]
    \label{prop:phiprimeev}
	Suppose that $T$ is a $(g-d+r)\times(r+1)$ rectangular standard Young
	tableaux and that $\phi'\circ\ev(T)=\alpha'\circ\beta\circ\ev(T)=c'$ is
	described by $(r;\underline x'_1,\ldots,\underline x'_g)$.

	Then the $\underline x'_i$'s can be computed according to the formula:
		\begin{align}\label{eq:phi'}
			\underline x'_{g+1-i}&=j-1+l_1-l_j
		\end{align}	
	where $j=l_c(i,T)$ is the index of the column containing $i$ and $l_s$ is the number of cells in the $s^\text{th}$ column of $T$ whose
	entries are at most $i$.
	\begin{proof}
		Evacuation takes column $j=l_c(i,T)$ of $T$ to column $r+2-j$. Hence, if $l_j$ is
		the number of cells in column $j$ of $T$ whose entries are at
		most $i$, then $l_j$ is also the number of cells in column $r+2-j$ of
		$\ev(T)$ whose entries are at least $g+1-i$ (since evacuation also flips
		the values of the entries).

		If we let $\beta(\ev(T))=(p'_0,p'_1,\ldots,p'_g)$, then the $l_s$
		acquire the following significance. For $j\neq 1$, $l_j$ counts the
		number of increases in the $r+2-j^\text{th}$ direction from $p'_{g-i}$
		to $p'_g=(r,r-1,\ldots,1)$. Similarly, $l_1$ counts the number of
		decreases along all directions from $p'_{g-i}$ to
		$p'_g=(r,r-1,\ldots,1)$. 
		
		Given that the $l_j$ are counting steps in the $r+2-j^\text{th}$
		direction from $p'_{g-i}$ to $p'_g$, we obtain the following analogue of
		Proposition~$\ref{prop:path}$:
			\[
				p'_{g-i} = p'_g-\left(\begin{array}{c}
						l_{r-1}-l_1\\
						l_r-l_1\\
						\dots\\
						l_2-l_1\end{array}\right)
				= \left(\begin{array}{c}
					r+l_1-l_{r+1}\\
					r-1+l_1-l_r\\
					\dots\\
					1+l_2-l_1\end{array}\right).
			\]

		Next we obtain the analogue of Proposition~$\ref{prop:x}$ using exactly
		the same argument. Suppose that $i$ is in the $j^\text{th}$ column of
		$T$. Then we have that $g+1-i$ is in the $r+2-j^\text{th}$ column of
		$\ev(T)$. If $j\geq 2$, then $p'_{g+1-i}-p'_{g-i}=e_{r+2-j}$ and hence
		$x'_{g+1-i}=p'_{g-i}(r+2-j)=j-1+l_1-l_j$ where $l_s$ is the number of
		cells in the $s^\text{th}$ column of $T$ whose entries are at most $i$.
		
		Otherwise, if $j=1$, we have that $g+1-i$ is in the $r+1^\text{st}$
		column of $\ev(T)$, which means that $p'_{g+1-i}-p'_{g-i}=(-1,\ldots,-1)$
		and $x'_{g+1-i}=0$. Since $0=1-1+l_1-l_1$, the formula holds in both
		cases.
	\end{proof}
\end{proposition}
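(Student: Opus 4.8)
The plan is to derive the formula by applying the combinatorial formula~(\ref{eq:phi}) for $\phi$ directly to the evacuated tableau $\ev(T)$, and then rewriting each of the three statistics $l_r$, $l_c$, $L_{last}$ appearing there in terms of $T$ via the symmetry defining $\ev$. First I would record that $\alpha$ and $\alpha'$ are given by the \emph{same} formula~(\ref{eq:alpha}) in the $\underline x$-notation (no $m_i$ appears in it), so that the tuple describing $\phi'(S) = \alpha'\circ\beta(S)$ is computed from a tableau $S$ by formula~(\ref{eq:phi}) in exactly the same way as for $\phi$; this is the content of the Remark following Example~\ref{ex:notation}. Applying this with $S = \ev(T)$ at index $g+1-i$ gives
\[
  \underline x'_{g+1-i} = r + l_r(g+1-i,\ev(T)) - l_c(g+1-i,\ev(T)) - L_{last}(g+1-i,\ev(T)),
\]
and the whole proof reduces to translating the right-hand side into data of $T$. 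Note that this route automatically subsumes the case split according to whether $\ev(T)$ has an $e$-step or a $(-1,\dots,-1)$-step at position $g+1-i$ (equivalently, whether $i$ lies in column $1$ of $T$ or not), since formula~(\ref{eq:phi}) was already established to hold in both cases.

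Next I would carry out the three translations. Because $(g-d+r)(r+1)=g$, evacuation sends the entry $i$, sitting in row $l_r(i,T)$ and column $j := l_c(i,T)$ of $T$, to the entry $g+1-i$ in row $(g-d+r)+1-l_r(i,T)$ and column $r+2-j$ of $\ev(T)$. Hence $l_c(g+1-i,\ev(T)) = r+2-j$ and $l_r(g+1-i,\ev(T)) = (g-d+r)+1-l_r(i,T)$. For the last term, the $(r+1)$-st column of $\ev(T)$ is the evacuation-image of the first column of $T$, and a cell of that first column contributes to $L_{last}(g+1-i,\ev(T))$ precisely when its entry exceeds $i$; since the first column has $g-d+r$ cells of which $l_1$ have entry at most $i$, this gives $L_{last}(g+1-i,\ev(T)) = (g-d+r) - l_1$. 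Substituting the three identities into the displayed expression, the genus-dependent constants $r$ and $g-d+r$ cancel, leaving $\underline x'_{g+1-i} = j-1+l_1-l_r(i,T)$.

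Finally, since the columns of a standard Young tableau increase downward, the cells of column $j$ of $T$ whose entries are at most $i$ are exactly those in rows $1,\dots,l_r(i,T)$, so $l_r(i,T) = l_j$; this turns the expression above into $j-1+l_1-l_j$, which is the claimed formula~(\ref{eq:phi'}). The only real work is the index bookkeeping — keeping straight which row, column, and value of $T$ maps to which in $\ev(T)$, and checking that all the off-by-one terms and the $g$-dependent constants cancel as claimed — together with the (harmless) observation that formula~(\ref{eq:phi}) may be reused for $\phi'$. As an alternative route I could instead re-derive the analogues of Proposition~\ref{prop:path} and Proposition~\ref{prop:x} directly, by reading the lattice path $\beta(\ev(T)) = (p'_0,\dots,p'_g)$ from its right end $p'_g = (r,r-1,\dots,1)$ backwards, but this amounts to reorganizing essentially the same computation.
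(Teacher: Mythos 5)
Your proposal is correct, and it is organized differently from the paper's own argument. The paper proves the formula by working inside the lattice-path picture: it reads the path $\beta(\ev(T))=(p'_0,\dots,p'_g)$ backwards from its right end $p'_g=(r,r-1,\dots,1)$, re-derives a backward analogue of Proposition~\ref{prop:path} expressing $p'_{g-i}$ in terms of the column counts $l_s$ of $T$, and then repeats the argument of Proposition~\ref{prop:x} for the step $p'_{g+1-i}-p'_{g-i}$, splitting into the cases $j\geq 2$ and $j=1$. You instead reuse the already-established formula~\eqref{eq:phi} applied to the tableau $\ev(T)$ at the entry $g+1-i$ (legitimate, since the $\underline x$-form~\eqref{eq:alpha} of $\alpha'$ contains no edge lengths, as the Remark after Example~\ref{ex:notation} records), and then translate the three statistics through the explicit symmetry $b_{k,l}=g+1-a_{m+1-k,\,r+2-l}$: the column of $g+1-i$ in $\ev(T)$ is $r+2-j$, its row is $(g-d+r)+1-l_r(i,T)$, and $L_{last}(g+1-i,\ev(T))=(g-d+r)-l_1$ because the last column of $\ev(T)$ is the reversed, value-flipped first column of $T$; together with $l_r(i,T)=l_j$ (columns increase downward) the constants cancel to give $j-1+l_1-l_j$, with the $j=1$ case absorbed automatically because \eqref{eq:phi} was proved to cover the $(r+1)$-st-column case. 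The trade-off: your route avoids re-deriving any path formulas and handles both cases uniformly, at the cost of the position-and-value bookkeeping for evacuation of a rectangle; the paper's route keeps everything in terms of steps of the path of $\ev(T)$, which makes the interpretation of $l_1$ and $l_j$ as counts of remaining steps transparent but requires redoing the analogues of Propositions~\ref{prop:path} and~\ref{prop:x} and an explicit case split. Both arguments are, as you note, reorganizations of the same computation, and your version is sound.
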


\begin{example}\label{ex:ev}
    Consider the tableau of Example~\ref{ex:tab} and its associated non-lingering lattice path $P' = \beta(\ev(T))$ shown in Figure~\ref{fig:pathreflected}.
    
    \[T = 
    \begin{tabular}{|r|r|r|}
        \hline
        1 & 3 & 4 \\
		\hline
		2 & 5 & 6 \\
		\hline
	\end{tabular}
    \hspace{1in}
    \ev(T) = 
        \begin{tabular}{|r|r|r|}
        \hline
        1 & 2 & 5 \\
    	\hline
		3 & 4 & 6 \\
		\hline
	    \end{tabular}
    \]
    
    \begin{figure}[htbp]
        \centering
        \includegraphics{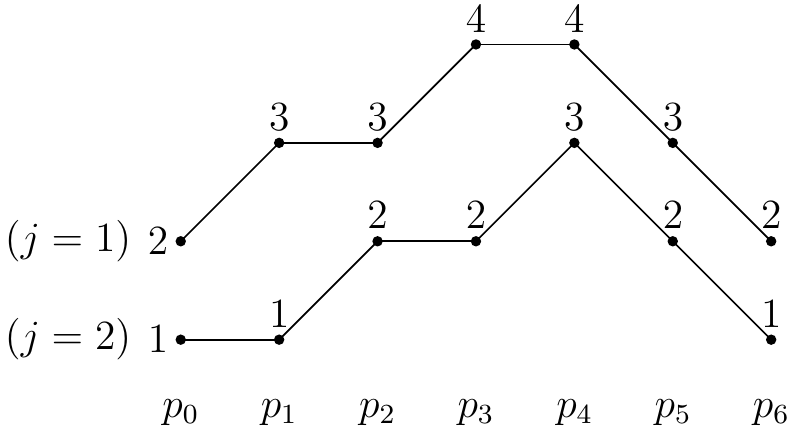}
        \caption{The $2$-dimensional non-lingering lattice path $P'$ associated to $\ev(T)$.}
        \label{fig:pathreflected}
    \end{figure}
    
    This example will demonstrate the shortcut formula for $\phi'\circ\ev$  of Proposition~\ref{prop:phiprimeev}. To compute $\underline x'_3$, for example, we see that $g+1-i=3$ implies $i=7-3=4$, and that $4$ is in column $3=j$.
    The number $l_3$ of cells in the third column that are at most $i=4$ is $1$, and the number of cells $l_1$ in the
    first column that are at most $i=4$ is $2$. It follows that $\underline x_3=j-1+l_1-l_j=3-1+2-1=3$. Doing this for
    each $\underline x_i$, we obtain the sequence
    $c'=\alpha'(P')=(2;\underline 2,\underline 1,\underline 3,\underline 2,\underline 0,\underline 0)$. In Example~\ref{ex:postshifting}, we will show that this sequence corresponds to the reflection $\sigma(c')$ of the $v_0$-reduced divisor
    $(2;\underline 2,\underline 3,\underline 1,\underline 0,\underline 1,\underline 0)$ from Example~\ref{ex:notation}.

\begin{figure}[htbp]
    \centering
    \includegraphics{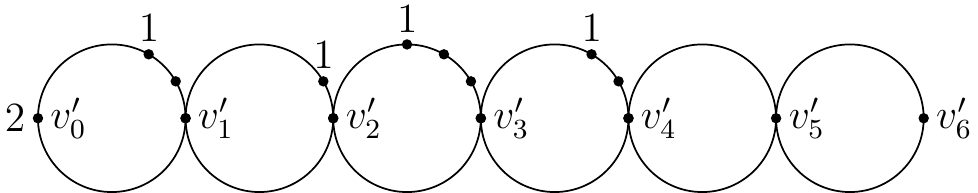}
    \caption{The proposed reflection of the $v_0$-reduced divisor from Example \ref{ex:running}}
    \label{fig:runningreflection}
\end{figure}
\end{example}

\subsection{Formula for $\sigma\circ\alpha$ and proof of Theorem~\ref{th:result}}\label{subsec:2}
We compute a formula for $\sigma\circ\alpha$ using the following lemma.

\begin{lemma}
    \label{lem:chipshifting}
	Let $c$ be a $v_0$-reduced divisor of non-negative degree at most $2g-2$, and for $i \geq 0$ let $c_i$ be the
	$v_i$-reduced divisor that is equivalent to $c$. Then if
	$\rho(c)=P=(p_0,p_1,\ldots,p_g)$, we have that $c_i(v_i)=p_i(1)$.
	\begin{proof}
		Trivially, we have that $p_0(1)=d_0=c_0(v_0)$.
		
		Next, suppose inductively that $p_{i-1}(1)=k=c_{i-1}(v_{i-1})$. By
		Lemma~\ref{lem:park} we know that $c_i(v_i)-c_{i-1}(v_{i-1})=-1,0,$ or
		$1$ depending on whether $x_i=0$, $x_i\equiv(k+1)m_i\bmod(\ell_i+m_i)$,
		or $x_i\not\equiv(k+1)m_i\bmod(\ell_i+m_i)$. But since $p_{i-1}(1)=k$,
		these are precisely the conditions for $p_i-p_{i-1}=(-1,-1,\ldots,-1),
		0$, or $e_1$, and hence for $p_i(1)-p_{i-1}(1)=-1,0,$ or $1$.
	\end{proof}
\end{lemma}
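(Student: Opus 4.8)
The plan is to argue by induction on $i$, exactly along the lines already suggested by the structure of the statement (the lemma relates the number of chips $c_i(v_i)$ on the $i$-th designated vertex of the $v_i$-reduced divisor to the first coordinate $p_i(1)$ of the lingering lattice path). The base case is immediate: by definition of $\rho_r$ (Algorithm~\ref{alg}) we have $p_0 = (d_0, d_0-1, \ldots)$ where $d_0$ is precisely the number of chips on $v_0$ in the $v_0$-reduced divisor $c = c_0$, so $p_0(1) = d_0 = c_0(v_0)$.

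For the inductive step, assume $p_{i-1}(1) = k = c_{i-1}(v_{i-1})$. The key observation is that the recentering Lemma~\ref{lem:park}, applied with this value of $k$, tells us precisely how $c_i(v_i)$ compares to $c_{i-1}(v_{i-1})$: in case (1), where $x_i = 0$, the new divisor $c_i$ has $k-1$ chips on $v_i$; in case (3), where $x_i \equiv (k+1)m_i \bmod (\ell_i+m_i)$, it has $k+1$ chips — but wait, case (3) puts the chips on $v_{i-1}$, not $v_i$; here one must use genericity (Definition~\ref{def:generic}) to see which vertex the chips land on, or more carefully track the statement of Lemma~\ref{lem:park}. Actually re-reading: in case (3) the $v_i$-reduced divisor $c'$ has $k+1$ chips on $v_{i-1}$, meaning $x_i$ for the \emph{next} step is $(k+1)m_i$ measured appropriately; the cleanest route is to note that $c_i(v_i) - c_{i-1}(v_{i-1}) \in \{-1, 0, +1\}$ according to whether $x_i = 0$, $x_i \equiv (k+1)m_i$, or neither, so I would phrase the three cases of Lemma~\ref{lem:park} uniformly as producing increments $-1, +1, 0$ in the count at the recentered vertex.

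Then I compare this with the behavior of the lattice path: by the definition of the step $p_i - p_{i-1}$ in Algorithm~\ref{alg}, the step is $(-1,\ldots,-1)$ exactly when $x_i = 0$, it is $e_1$ exactly when $x_i \equiv (p_{i-1}(1)+1)m_i \bmod (\ell_i+m_i)$ and both $p_{i-1}, p_{i-1}+e_1 \in \mathcal{C}$, and it is $0$ (or $e_j$ for $j \geq 2$) otherwise. Since $p_{i-1}(1) = k$ by the inductive hypothesis, and since a step $e_j$ with $j \geq 2$ leaves the first coordinate unchanged just like a lingering step, the first coordinate $p_i(1)$ changes by $-1$, $+1$, or $0$ under exactly the same trichotomy of conditions on $x_i$ that governs $c_i(v_i)$. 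Hence $p_i(1) = p_{i-1}(1) + (c_i(v_i) - c_{i-1}(v_{i-1})) = k + (c_i(v_i) - k) = c_i(v_i)$, completing the induction.

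The only delicate point — and the step I expect to require the most care — is matching up the condition ``both $p_{i-1}, p_{i-1}+e_1 \in \mathcal{C}$'' in Algorithm~\ref{alg} with the cases of Lemma~\ref{lem:park}: one must check that when $x_i \equiv (k+1)m_i$ but $p_{i-1}+e_1 \notin \mathcal{C}$ (i.e. the Weyl-chamber inequality $p_{i-1}(1)+1 > p_{i-1}(2)$ would fail — impossible here since $p_{i-1}(1) > p_{i-1}(2)$ already, so actually $p_{i-1}+e_1$ is always in $\mathcal{C}$ when $p_{i-1}$ is), the accounting still works; in fact $p_{i-1}+e_1\in\mathcal C$ is automatic given $p_{i-1}\in\mathcal C$, so the chamber condition on the $e_1$ step is vacuous and no subtlety arises. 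I would remark on this explicitly. Everything else is bookkeeping with the three cases of Lemma~\ref{lem:park}.
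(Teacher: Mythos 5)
Your proposal is correct and follows essentially the same route as the paper's proof: induction on $i$, with the base case $p_0(1)=d_0=c_0(v_0)$ and the inductive step obtained by matching the trichotomy of cases in Lemma~\ref{lem:park} (increments $-1,+1,0$ in $c_i(v_i)$ for $x_i=0$, $x_i\equiv(k+1)m_i$, neither) against the corresponding cases of Algorithm~\ref{alg} for the first coordinate of the path. Your extra remarks---that steps $e_j$ with $j\geq 2$ leave $p_i(1)$ unchanged and that the chamber condition for an $e_1$ step is automatic when $p_{i-1}\in\mathcal C$---only make explicit points the paper leaves implicit, and your reading of case (3) of Lemma~\ref{lem:park} (the $k+1$ chips sitting at the recentered vertex) is the intended one.
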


\begin{proposition}[Formula for $\sigma\circ\alpha$]\label{prop:sigmaalpha}
	Suppose that $c\in\Pic(\Gamma_g)$ is such that $c=\alpha(P)$ for some
	non-lingering lattice path $P=(p_0,p_1,\ldots,p_g)$, i.e. that $c$ is of rank
	$r$ and degree $d$ such that $(g-d+r)(r+1)=g$, and further that $c$ is
	described by a sequence $(r;\underline x_1,\ldots,\underline x_g)$. 
	
	Then $\sigma(c)$ is also of rank $r$ and degree $d$ such that
	$(g-d+r)(r+1)=g$, and hence can be described by a sequence $(r;\underline
	x'_1,\ldots,\underline x'_g)$. The $\underline x'_i$'s can be computed from the $\underline x_i$'s
	using the formula:
		\begin{align}\label{eq:relate}
			\underline x'_{g+1-i}&=\max\{p_{i-1}(1)-\underline x_i-1,0\}.
		\end{align}
	\begin{proof}
		Note that under the reflection that takes $\Gamma_g$ to
		$\Gamma'_g$, the counter-clockwise distances from $v_i$ in the loop
		$\gamma_i$ are sent to clockwise distance from $v'
		_{1+g-i}=v_i$ in the loop $\gamma'_{g+1-i}=\gamma_i$. Hence, the $\underline x'_i$'s
		in the sequence $(r; \underline x'_1,\ldots, \underline x'_g)$ which describes $\sigma(c)$, the
		$v_g=v'_0$-reduced divisor equivalent to $c$, can be
		interpreted as both the counter-clockwise distance from $v'_i$ in the
		loop $\gamma'_i$, and as the clockwise distances from $v_{1+g-i}$ of the
		single chip on the loop $\gamma_{1+g-i}$.

		These clockwise distances, however, are determined by successively
		computing the equivalent $v_1$-reduced divisor, then the equivalent $v_2$-reduced
        divisor and so on until the equivalent $v_g$-reduced divisor. Lemma~\ref{lem:park} applies and gives
        us the following.
		
		Suppose that $k$ is the number of chips on $v_{i-1}$ of the $v_{i-1}$-reduced divisor
        $c_{i-1}$ that is equivalent to $c$. Then combining Lemma~\ref{lem:park} with our 
        notation for $\underline x_i$, we obtain:

		\begin{enumerate}
			\item if $\underline x_i$ is $0$, i.e. if there is no chip on
				$\gamma_i\setminus\{v_{i-1}\}$ in $c_{i-1}$, then there will be one chip on
				$\gamma_i\setminus\{v_i\}$ that is a clockwise distance
				$(k-1)m_i$ away from $v_{i-1}$ in $c_i$;
			\item if $\underline x_im_i\not\equiv km_i\bmod (l_i+m_i)$, then
				there is one chip on $\gamma_i\setminus\{v_i\}$  in $c_{i-1}$ that is a
				clockwise distance $km_i-(\underline x_i+1)m_i$ away from
				$v_{i-1}$ in $c_i$;
			\item if $\underline x_im_i\equiv km_i\bmod(l_i+m_i)$, then there
				are no chips left on $\gamma_i\setminus\{v_i\}$ in $c_i$.
		\end{enumerate}
		
		Now, $k$ is of course $p_{i-1}(1)$ by the previous lemma. Hence, the
		formula $\underline x'_{g+1-i}=\max\{p_{i-1}(1)-\underline x_i-1,0\}$
		holds for each of the three cases above. 
	\end{proof}
\end{proposition}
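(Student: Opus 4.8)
The plan is to compute $\sigma(c)$ explicitly: first pass from the $v_0$-reduced divisor $c$ to the $v_g$-reduced divisor equivalent to it, then re-read the chip positions through the orientation-reversing identification of $\Gamma'_g$ with $\Gamma_g$. The claim that $\sigma(c)$ has rank $r$ and degree $d$ (hence still satisfies $(g-d+r)(r+1)=g$ and is described by some sequence $(r;\underline x'_1,\dots,\underline x'_g)$) is immediate: $\sigma(c)$ is by construction equivalent to $c$ on the common underlying metric space, and both rank and degree are invariant under equivalence.

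Next I would set up the geometric dictionary. Reflecting $\Gamma_g$ to $\Gamma'_g$ reverses orientation and relabels the $i$-th loop as the $(g{+}1{-}i)$-th, identifying $\gamma'_{g+1-i}$ with $\gamma_i$ so that the vertex $v_{i-1}$ becomes an endpoint of $\gamma'_{g+1-i}$; consequently a counter-clockwise distance measured from that endpoint on $\gamma'_{g+1-i}$ is literally a clockwise distance measured from $v_{i-1}$ on $\gamma_i$. Thus $\underline x'_{g+1-i}$ records, in multiples of $m_i$, the clockwise distance from $v_{i-1}$ to the unique chip on $\gamma_i$ in the $v_g$-reduced divisor equivalent to $c$, and it remains to locate that chip.

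To do so I would recenter successively along $c=c_0,c_1,\dots,c_g$, where $c_i$ is the $v_i$-reduced divisor equivalent to $c$, applying Lemma~\ref{lem:park} at each step. The crucial observation is that the chip in the interior of $\gamma_i$ in $c_g$ is exactly the one deposited at the single step from $c_{i-1}$ to $c_i$: every later recentering alters only the loops $\gamma_{i+1},\dots,\gamma_g$ and the chip counts at their shared vertices, never the interior of $\gamma_i$. Writing $k:=c_{i-1}(v_{i-1})$ and translating the three cases of Lemma~\ref{lem:park} into the $\underline x$-notation, the deposited chip either (i) sits at clockwise distance $(k-1)m_i$ from $v_{i-1}$ when $\underline x_i=0$, or (ii) sits at clockwise distance $(k-\underline x_i-1)m_i$ when $\underline x_im_i\not\equiv km_i\bmod(\ell_i+m_i)$, or (iii) no chip remains on $\gamma_i\setminus\{v_i\}$ when $\underline x_im_i\equiv km_i\bmod(\ell_i+m_i)$. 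By Lemma~\ref{lem:chipshifting} I may replace $k$ by $p_{i-1}(1)$. To conclude I would note that, since $c=\alpha(P)$, formula (\ref{eq:alpha}) gives that $\underline x_i$ is either $0$ or a coordinate $p_{i-1}(s)$, hence $\underline x_i\le p_{i-1}(1)$; using the genericity of $\Gamma_g$ (Definition~\ref{def:generic}) and the degree bound $\deg(c)\le 2g-2$, case (iii) occurs precisely when $\underline x_i=p_{i-1}(1)$ (the $e_1$-step), where $p_{i-1}(1)-\underline x_i-1=-1<0$; in cases (i) and (ii) one has $p_{i-1}(1)-\underline x_i-1\ge 0$ and the clockwise distance is exactly $(p_{i-1}(1)-\underline x_i-1)m_i$. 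In all cases $\underline x'_{g+1-i}=\max\{p_{i-1}(1)-\underline x_i-1,0\}$, which is (\ref{eq:relate}).

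The step I expect to be the main obstacle is the cascade bookkeeping in the previous paragraph: verifying rigorously that exactly one chip survives in the interior of each $\gamma_i$ throughout the entire sequence of recenterings, checking that the $\max$-truncation in (\ref{eq:relate}) absorbs precisely the configurations in which Lemma~\ref{lem:park} leaves no chip on $\gamma_i\setminus\{v_i\}$ (or deposits it onto the endpoint $v_{i-1}$), and pinning down exactly where the genericity hypothesis is used — both to force case (iii) to coincide with the $e_1$-step and to keep the chip of case (i) from sliding onto $v_i$, as flagged in the remark following Lemma~\ref{lem:park}.
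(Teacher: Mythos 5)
Your proposal is correct and takes essentially the same route as the paper's proof: read $\underline x'_{g+1-i}$ as the clockwise distance from $v_{i-1}$ of the chip on $\gamma_i$ in the $v_g$-reduced divisor, obtained by successive recentering via Lemma~\ref{lem:park}, with $k=p_{i-1}(1)$ supplied by Lemma~\ref{lem:chipshifting}, and then verify the formula in the three cases. Your additional bookkeeping (using formula~(\ref{eq:alpha}) and genericity to show the $\max$-truncation is active exactly at the $e_1$-steps) only makes explicit what the paper leaves implicit.
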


\begin{example}
    \label{ex:postshifting}
    Consider the $v_0$-reduced divisor
    $c = (r;\underline x_1,\ldots,\underline x_g)=(2;\underline 2,\underline 3,\underline 1,\underline 0,\underline 1,\underline 0)$ on
    the graph $\Gamma_6$ from Example~\ref{ex:notation}. Its associated non-lingering lattice path is the one from
    Figure~\ref{fig:nonlingering} with top path $(p_0(1),p_1(1),\ldots,p_g(1))=(2,3,4,4,3,3,2)$.
    
    Figure \ref{fig:chipshifting} illustrates the process described in the proof of Proposition~\ref{prop:sigmaalpha} of successively computing the $v_i$-reduced divisors $c_i$ equivalent to $c$. Looking at the top path $(2,3,4,4,3,3,2)$, the figure also illustrates the claim of Lemmas~\ref{lem:park} and \ref{lem:chipshifting}.
\begin{figure}[h!tbp]
\centering
\subfigure{\includegraphics{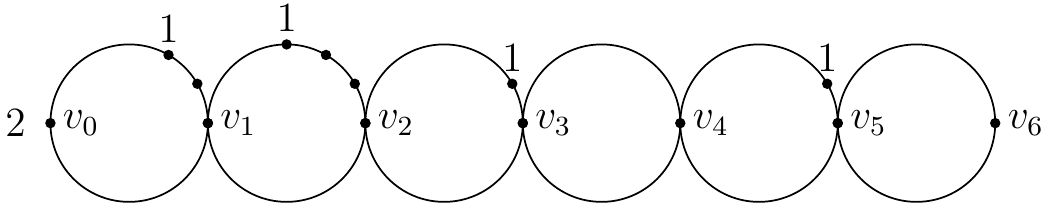}}
\subfigure{\includegraphics{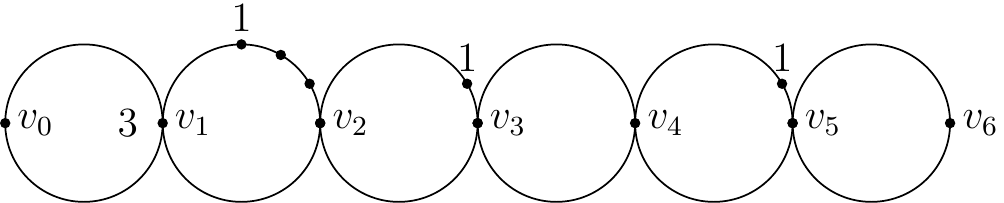}}
\subfigure{\includegraphics{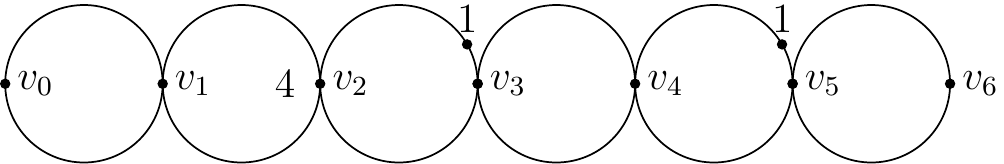}}
\subfigure{\includegraphics{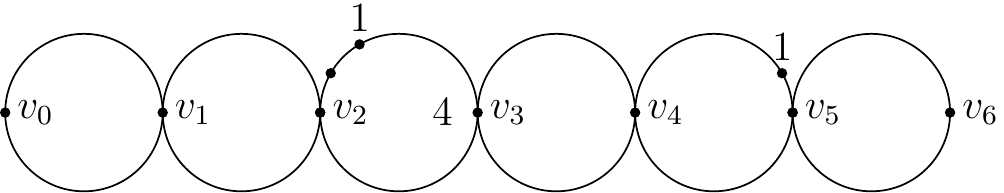}}
\subfigure{\includegraphics{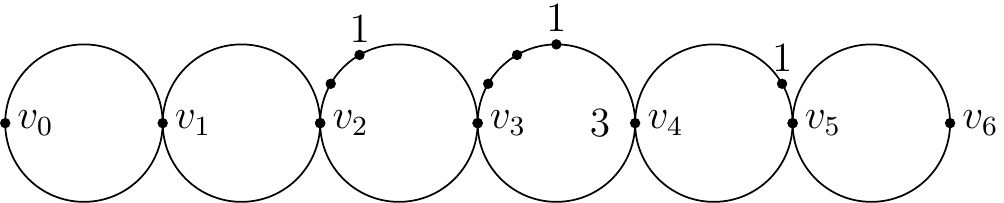}}
\subfigure{\includegraphics{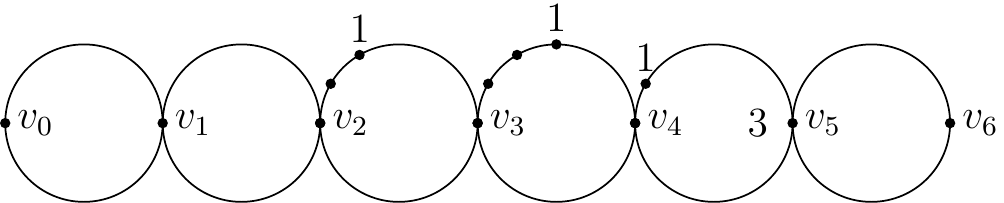}}
\subfigure{\includegraphics{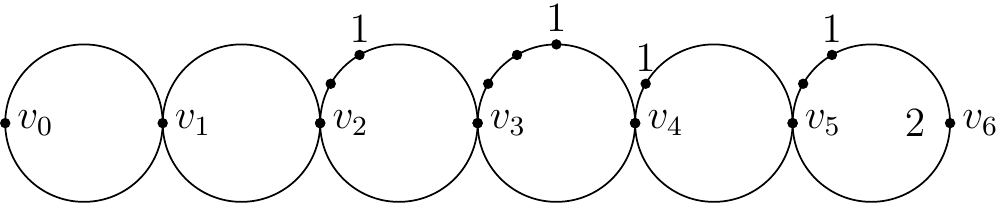}}
\begin{flushleft}
We reflect by setting $v_i = v'_{g+1-i}$, and obtain the configuration illustrated in Figure~\ref{fig:runningreflection}:
\end{flushleft}
\subfigure{\includegraphics{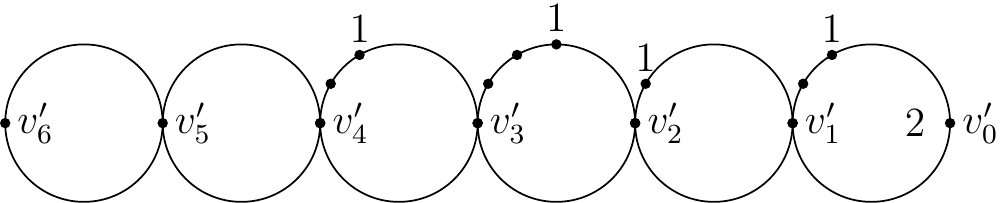}}
\subfigure{\includegraphics{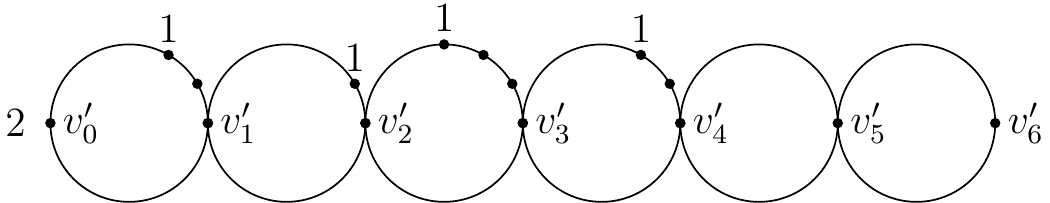}}
\caption{Successively computing $v_i$-reduced divisors}
\label{fig:chipshifting}
\end{figure}
    
    To illustrate Proposition~\ref{prop:sigmaalpha}, note that if we subtract the sequence of $\underline x_i$'s in $c=(2;\underline 2,\underline 3, \underline 1, \underline 0, \underline 1, \underline 0)$ from the sequence of $p_{i-1}(1)$'s for $1\leq i\leq g$, which is $(2,3,4,4,3,3)$, we obtain $(0,0,3,4,2,3)$.
    Subtracting a further $1$ from everything and reversing, we get $(2,1,3,2,-1,-1)$. Taking the maximum with $0$ and putting the rank $r=2$
    in front, we obtain the $v'_0$-reduced divisor on $\Gamma'_g$ given by
    $(r;\underline x'_1,\ldots,\underline x'_g)=(2;\underline 2, \underline 1,\underline 3,\underline 2,\underline 0,\underline 0)$, which is what Figure~\ref{fig:chipshifting} also produces.
    
    In accordance with our theorem, this is exactly the same $v'_0$-reduced divisor as the one obtained from evacuating the
    tableau in Example~\ref{ex:ev}.
\end{example}

\begin{proof}[Proof of Theorem~$\ref{th:result}$]
	We need to show that $\phi'\circ\ev=\sigma\circ\phi$, and we have so far
	formulas for $\phi'\circ\ev$, $\sigma\circ\beta$, and $\alpha$ where
	$\phi=\beta\circ\alpha$.

	We proceed to obtain a formula for $\sigma\circ\phi$, which we then reduce
	to the formula for $\phi'\circ\ev$.

	Suppose that $T$ is a rectangular standard Young tableaux. Let
	$\displaystyle\beta(T)=P=(p_0,\ldots,p_g)$,
	$\displaystyle\phi(T)=\alpha\circ\beta(T)=\alpha(P)=(r;\underline x_1,\ldots,\underline
	x_g)$ and
	$\sigma\circ\alpha(P)=(r;\underline x'_1,\ldots,\underline x'_g)$. We have
	established that:
	\begin{enumerate}
		\item $\underline x'_{g+1-i}=\max\{p_{i-1}(1)-\underline x_i-1,0\}$ by
			the formula~\eqref{eq:relate} for $\sigma\circ\alpha$.
%
%
		\item $p_{i-1}(1)=r+L_{first}(i,T)-L_{last}(i,T)$ by the formula~\eqref{eq:beta} for $\beta$ 

(noting that $i$ is not necesarrily in column $1$).
%
%
		\item $\underline x_i=r + l_r(i,T) - l_c(i,T) - L_{last}(i,T)$ by the formula~\eqref{eq:phi} for $\phi$.
	\end{enumerate}
	where the notation $L_{first}(i,T)$, $L_{last}(i,T)$, $l_r(i,T)$, and $l_c(i,T)$ is as above.
        Thus
		\begin{eqnarray*}
			\underline x'_{g+1-i}&=&\max\{(r+L_{first}(i,T)-L_{last}(i,T))\\
			~&~& ~ \hspace{0.43in} -(r + l_r(i,T)-l_c(i,T)-L_{last}(i,T))-1,0\} \\
			&=&\max\{L_{first}(i,T) - l_r(i,T)+l_c(i,T)-1,0\}.
	        \end{eqnarray*}
    Note that if $l_c(i,T) = 1$, then the number of cells in column $1$ whose entries 
are strictly less than $i$ is one less than the index of the row containing $i$. 
We thus obtain $\underline x'_{g+1-i} = \max\{L_{first}(i,T)-l_r(i,T),0\} = \max\{-1,0\}=0$. Comparing this with formula 
(\ref{eq:phi'}), we see $l_c(i,T)=1$ and indeed $0= l_c(i,T)-1 + l_1 - l_1$.

On the other hand, if $i$ is in column $j \not = 1$, then $l_1 = L_{first}(i,T)$ and $l_j = l_r(i,T)$.
Since $j-1+l_1-l_j$ is certainly non-negative as it is the formula~\eqref{eq:phi'} for
	$\phi'\circ\ev$, we have $\underline x'_{g+1-i} = j - 1 + l_1 - l_j$ as desired in this case as well.    
    
    Hence, we have proven that the formulas for $\sigma\circ\phi$ and $\phi'\circ\ev$ agree.
\end{proof}

\section{Conjugation of Tableaux and Riemann-Roch Duality} \label{sec:res2}

We now prove a conjecture based on discussions with Sam Payne \cite{Payne2012}.

\begin{definition}
    We consider an involution $t\colon\RSYT\to\RSYT$, called
    \emph{conjugation} (also known as transposition), sending $T = (a_{ij})$ in $RSYT$ to $T^t=(a_{ji})$. 
\end{definition}

Unlike evacuation, the dimensions of a tableau $T$ are not fixed under conjugation.  In particular, an $m \times n$ standard Young tableau is 
sent to an $n \times m$ standard Young tableau.  Consequently, the corresponding divisors $\phi(T)$ and $\phi(T^t)$, have 
different ranks and degrees.  Nevertheless, there is a natural duality induced by this involution on rectangular standard Young tableaux.

\begin{theorem} \label{th:result2}
Suppose that $g = (g-d+r)(r+1)$ and that $T$ is a rectangular $(g-d+r) \times (r+1)$ standard Young tableau.  Let $\phi(T) = (r;\underline x_1,\ldots,\underline x_g)$, a rank $r$ and degree $d$ divisor $c$ in $Pic(\Gamma_g)$.

Then $\phi(T^t)$ is linear equivalent to the rank $g-d+r-1$ and degree $2g-2-d$ divisor $K - c$ in $Pic(\Gamma_g)$.  Here $K$ is the canonical divisor on $\Gamma_g$ that appears in the Tropical Riemann-Roch Theorem (Theorem \ref{TRR}).
\end{theorem}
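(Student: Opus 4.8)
The plan is to use Tropical Riemann--Roch to reduce the assertion to an identity of $v_0$-reduced divisors, and then to compute the $v_0$-reduced representative of $K-c$ explicitly. First I would identify $K$: on $\Gamma_g$ every point other than $v_1,\dots,v_{g-1}$ has valence $2$ (the interior loop points and the two endpoints $v_0,v_g$), while each of $v_1,\dots,v_{g-1}$ has valence $4$, so $K=2\sum_{i=1}^{g-1}(v_i)$, a divisor of degree $2g-2$. Applying Theorem~\ref{TRR} to $c$ then gives $r(K-c)=r(c)-\deg(c)-1+g=g-d+r-1=:r'$ and $\deg(K-c)=2g-2-d=:d'$, and a short computation shows $g-d'+r'=r+1$, hence $(g-d'+r')(r'+1)=(r+1)(g-d+r)=g$. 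Thus, by Theorem~\ref{th:cools} and Definition~\ref{def:phi}, the class $K-c$ lies in the image of $\phi$, say $K-c=\phi(U)$ for a unique standard Young tableau $U$, which is necessarily of shape $(g-d'+r')\times(r'+1)=(r+1)\times(g-d+r)$, the same shape as $T^t$. Since $\phi$ is injective it suffices to prove $\phi(T^t)\sim K-c$, equivalently $\phi(T)+\phi(T^t)\sim K$.

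Next I would record the formula for $\phi(T^t)=(r';\underline x^\dagger_1,\dots,\underline x^\dagger_g)$ by applying Proposition~\ref{prop:x} to $T^t$, with $r$ replaced by $r'$. Transposition interchanges rows and columns, so $l_r(i,T^t)=l_c(i,T)$ and $l_c(i,T^t)=l_r(i,T)$, while the last column of $T^t$ is the last row of $T$; this rewrites each $\underline x^\dagger_i$ purely in terms of the position of $i$ inside $T$ and the column-fill counts of $T$, in precisely the notation ($l_r$, $l_c$, $L_{first}$, $L_{last}$, and the $l_s$) used in the proof of Theorem~\ref{th:result}. The remaining task is to show that this same sequence describes the $v_0$-reduced form of $K-c$.

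The heart of the proof is that last computation. The divisor $K-c$ carries $-r$ chips on $v_0$, two chips on each $v_i$ with $1\le i\le g-1$, and a single negative chip in the interior of $\gamma_i$ precisely for the $i$ not in the last column of $T$ (namely at counterclockwise distance $\underline x_i m_i$ from $v_i$), and genericity (Definition~\ref{def:generic}) guarantees that none of these interior chips sits on a vertex. I would bring $K-c$ to $v_0$-reduced form by sweeping from $v_1$ toward $v_g$ one loop at a time, converting at stage $i$ a configuration that is ``reduced through $\gamma_{i-1}$'' into one ``reduced through $\gamma_i$'' by a case analysis in the style of Lemma~\ref{lem:park} and Proposition~\ref{prop:sigmaalpha}, but now with the two extra chips at $v_i$ folded in; as in Lemma~\ref{lem:chipshifting} the number of chips left on $v_i$ after stage $i$ should equal the first coordinate of the non-lingering lattice path $\rho_{r'}(K-c)$, with the whole post-stage-$i$ configuration controlled by that path. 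This step is the main obstacle: the two chips at each $v_i$ feed negative chips in the interiors of \emph{both} $\gamma_i$ and $\gamma_{i+1}$, so the reduction is genuinely two-sided and messier than the one-sided sweeps already appearing in the paper, and one must check that the sweep never strands chips and that the configurations ruled out by genericity never arise. A cleaner alternative that bypasses the sweep is to exhibit a rational function $f$ on $\Gamma_g$ with $\sum_x\ord_x(f)(x)=\phi(T)+\phi(T^t)-K$, built loop by loop as a piecewise-linear tent-type function whose integer slopes on $\gamma_i$ are read off from the path coordinates $p_{i-1}(j)$ and their $T^t$-analogues, arranged so that on each $\gamma_i$ it transports the two interior chips onto $v_{i-1}$ and $v_i$ with the correct multiplicities; then only the continuity of $f$ across the vertices $v_i$ and the integrality of its slopes remain to be checked. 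In either case one finishes by matching the resulting formula for the $v_0$-reduced form of $K-c$ with the $\underline x^\dagger_i$ of the previous paragraph, using the same elementary identities among $l_r(i,T)$, $l_c(i,T)$, $L_{first}(i,T)$, $L_{last}(i,T)$ and the $l_s$ that close the proof of Theorem~\ref{th:result}; this yields $U=T^t$, completing the argument.
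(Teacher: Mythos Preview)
Your overall architecture matches the paper's: split the claim into (a) a formula for $\phi(T^t)$ obtained by applying Proposition~\ref{prop:x} with rows and columns swapped, and (b) an explicit computation of the $v_0$-reduced representative of $K-c$, then match the two. Your part~(a) is essentially the paper's Proposition~4.3. The divergence is entirely in part~(b).

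For (b), the paper does \emph{not} sweep from $v_1$ toward $v_g$. It fires the nested subgraphs $\gamma_g$, then $\gamma_{g-1}\cup\gamma_g$, and so on, pushing chips \emph{rightward to leftward} toward $v_0$; this is the natural direction for $v_0$-reduction (Dhar's burning starts at $v_0$ and fires complements of its neighborhoods). Your left-to-right sweep runs into exactly the difficulty you flag: Lemma~\ref{lem:park} and Lemma~\ref{lem:chipshifting} presuppose a $v_{i-1}$-reduced input, but $K-c$ is not reduced at any vertex, so there is no clean way to ``process $\gamma_1$ first'' without already knowing how many chips flow in from the right. The paper's right-to-left sweep avoids this entirely: after firing $\gamma_{g-i+1}\cup\cdots\cup\gamma_g$, the only data needed on loop $\gamma_{g-i}$ are the $Z_{g-i}$ chips sitting at $v_{g-i}$, the two chips at $v_{g-i-1}$, and the possible $-1$ in the interior.

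The second thing you are missing is the governing combinatorial quantity. The paper introduces $z_i = 1 + \#\{j>i \text{ in the last row or last column of }T\}$ and proves (its Lemma~4.5) that $\underline x_i$ equals $0$, equals $z_i-1$, or lies strictly in $(0,z_i-2)$, according as $i$ sits in the last column, the last row, or neither, and in particular is \emph{never} $z_i-2$. This trichotomy is exactly the case split for the right-to-left firing, and the exclusion of $z_i-2$ is what guarantees the chip count $Z_{g-i}$ tracks $z_{g-i}$ inductively. Without isolating $z_i$ and this lemma, your case analysis has no anchor, which is why your sketch stalls at ``the main obstacle''. Your rational-function alternative is a legitimate idea but is not what the paper does, and carrying it out would still require essentially the same bookkeeping to determine the slopes on each loop.
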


To prove Theorem \ref{th:result2}, it suffices to prove the following two propositions.

\begin{proposition} \label{prop:trans1}
Assume the hypotheses of Theorem \ref{th:result2}, including the equality
$\phi(T) = (r;\underline x_1,\ldots,\underline x_g)$.
Define tuples $(\underline z_0,\underline z_1, \dots, \underline z_{g-1})$ and $(\underline y_1,\underline y_2,\dots, \underline y_g)$ as follows.
For $0 \leq i \leq g-1$, let $z_i = \#\{ j > i \mathrm{~in~the~last~row~or~last~column~of~}T\} + 1$.  Then, for $1 \leq i \leq g$, define $\underline y_i$ as 
\begin{eqnarray} \label{eq:yy} \underline y_i = z_{i-1} - \underline x_i - 2.\end{eqnarray}
Then we obtain $\phi(T^t) = (s; \underline y_1, \dots, \underline y_g)$ where $s = g-d+r-1$.
\end{proposition}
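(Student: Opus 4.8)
The plan is to compute $\phi(T^t)$ directly from the formula for $\phi$ in Proposition~\ref{prop:x} and match it term-by-term with the proposed expression $z_{i-1} - \underline x_i - 2$. First I would record what the parameters of $T^t$ are: if $T$ is $(g-d+r)\times(r+1)$ then $T^t$ is $(r+1)\times(g-d+r)$, so in the notation of the theorem the ``$r$'' for $T^t$ is $s = g-d+r-1$ (the number of columns of $T^t$ minus one), and the number of rows of $T^t$ is $s+1 - (2g-2-d) + s$... more simply, applying Proposition~\ref{prop:x} to $T^t$ gives, for each $1\le i\le g$,
\[
\underline y_i \;=\; s + l_r(i,T^t) - l_c(i,T^t) - L_{last}(i,T^t).
\]
Now the key observation is the transpose dictionary: $l_r(i,T^t) = l_c(i,T)$, $l_c(i,T^t) = l_r(i,T)$, and the ``last column of $T^t$'' is the image of the ``last row of $T$,'' so $L_{last}(i,T^t)$ equals the number of cells in the last \emph{row} of $T$ whose entries are strictly less than $i$. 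Substituting the formula for $\underline x_i$ from Proposition~\ref{prop:x}, namely $\underline x_i = r + l_r(i,T) - l_c(i,T) - L_{last}(i,T)$, I would subtract to obtain
\[
\underline y_i + \underline x_i \;=\; (s+r) - L_{last}(i,T) - \bigl(\text{\#cells in last row of }T\text{ with entry}<i\bigr),
\]
after the $l_r/l_c$ terms cancel. It then remains to show the right-hand side equals $z_{i-1} - 2$.

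The combinatorial heart of the argument is identifying $z_{i-1}$. By definition $z_{i-1} = \#\{j \ge i : j \text{ in last row or last column of } T\} + 1$. The last row and last column of $T$ overlap in exactly one cell (the bottom-right corner, containing $g = mn$), and the total number of cells in (last row)$\cup$(last column) is $(r+1) + (g-d+r) - 1 = g - d + 2r$. So $\#\{j < i : j \in \text{last row}\cup\text{last column}\} = L_{last}(i,T) + (\text{\#cells in last row with entry}<i) - [\text{corner entry} < i]$, and since the corner entry is $g$, that indicator is $0$ for all $i \le g$; hence
\[
z_{i-1} = (g-d+2r) - L_{last}(i,T) - (\text{\#cells in last row with entry}<i) + 1.
\]
Comparing with the displayed expression for $\underline y_i + \underline x_i$ and using $s+r = (g-d+r-1)+r = g-d+2r-1$, I would get $\underline y_i + \underline x_i = z_{i-1} - 2$, which is exactly equation~\eqref{eq:yy}. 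Finally I would note that this identity forces all the $\underline y_i$ computed from $T^t$ to be nonnegative automatically (they are values of the $\phi$-formula applied to a genuine tableau), and that the leading entry is $s$ because $T^t$ has $s+1$ columns, completing the identification $\phi(T^t) = (s;\underline y_1,\dots,\underline y_g)$.

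The main obstacle I anticipate is bookkeeping the off-by-one constants and the overlap of the last row and last column: one must be careful that ``strictly less than $i$'' versus ``at most $i-1$'' is handled consistently (as in the parenthetical remark in the proof of Proposition~\ref{prop:x}), that the shared corner cell is not double-counted, and that the case distinction $l_c(i,T)=1$ versus $l_c(i,T)\ne 1$ (equivalently, whether $i$ lies in the first column of $T$, i.e. the first row of $T^t$) does not actually cause a split — it should fall out uniformly once the last-row/last-column count is written correctly, just as the two cases merged in Proposition~\ref{prop:x}. A secondary, purely expository point is to confirm that the degree and rank of $\phi(T^t)$ are $2g-2-d$ and $g-d+r-1$; this is immediate from $\phi$ being the Cools et al.\ bijection onto rank-$s$, degree-$d'$ divisors with $(g-d'+s)(s+1)=g$ applied to the $(r+1)\times(g-d+r)$-shaped tableau $T^t$, but it is worth stating explicitly so that Proposition~\ref{prop:trans2} (the comparison with $K-c$) has the right target.
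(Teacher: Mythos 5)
Your proposal is correct and follows essentially the same route as the paper: apply the formula of Proposition~\ref{prop:x} to $T^t$ via the transpose dictionary, add it to the formula for $\underline x_i$ so the $l_r/l_c$ terms cancel, and identify the remaining quantity with $z_{i-1}-2$ by counting the last row and last column of $T$ with the shared corner cell (which contains $g$) handled once. The only cosmetic difference is that the paper counts entries greater than $i-1$ in those cells while you count entries less than $i$ against the total size $g-d+2r$ of the union; the bookkeeping is equivalent.
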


\begin{proposition} \label{prop:trans2}
Let $K$ be the canonical divisor on $\Gamma_g$, let $c$ be the $v_0$-reduced divisor in $Pic(\Gamma_g)$ represented by $(r;  \underline x_1, \dots, \underline x_g)$, and let 
$(s;  \underline y_1, \dots, \underline y_g)$ be as defined in Proposition \ref{prop:trans1}.
Then the $v_0$-reduced divisor equivalent to $K-c$ in $Pic(\Gamma_g)$ is represented by 
$(s;  \underline y_1, \dots, \underline y_g)$.
\end{proposition}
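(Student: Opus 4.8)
The plan is to work directly with the canonical divisor $K$ on $\Gamma_g$ and chase it through Lemma~\ref{lem:park}, just as the proof of Proposition~\ref{prop:sigmaalpha} did for the reflection. First I would compute $K$ explicitly: for the model of $\Gamma_g$ with vertices $\{v_0,\ldots,v_g\}$, the interior vertices $v_1,\ldots,v_{g-1}$ have valence $4$ and the endpoints $v_0,v_g$ have valence $2$, so $K=\sum_{i=1}^{g-1} 2(v_i)$, a divisor of degree $2g-2$ concentrated on the interior designated vertices. Since $c$ is the $v_0$-reduced divisor $(r;\underline x_1,\ldots,\underline x_g)$, the divisor $K-c$ has $-d_0=-r$ chips on $v_0$, $2$ chips on each $v_i$ for $1\le i\le g-1$, and $-\underline x_i$-related contributions on the loops. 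I would then run Dhar's burning algorithm / Lemma~\ref{lem:park} from $v_0$ rightward, recentering one loop at a time, exactly as in Proposition~\ref{prop:sigmaalpha}, to find the $v_0$-reduced representative of $K-c$; the key bookkeeping quantity is again $c'_i(v_i)$, the number of chips on $v_i$ after recentering through the first $i$ loops.

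The main computation is to track this running count. Writing $k_i$ for the number of chips on $v_i$ in the $v_i$-reduced divisor equivalent to $K-c$, I expect an inductive relation of the same flavor as in Lemma~\ref{lem:chipshifting}: each loop either increments, decrements, or fixes $k_i$ according to which of the three cases of Lemma~\ref{lem:park} applies, and the trichotomy is governed precisely by comparing $\underline x_i$ with $k_{i-1}$. The two extra chips sitting on each interior $v_i$ shift the thresholds, and this is exactly where the additive constant $-2$ in formula~\eqref{eq:yy} should appear, together with the $+1$ hidden in the definition of $z_i$. I would verify that $k_{i-1}$ equals $z_{i-1}-1$ (equivalently, that $z_{i-1}$ counts chips-on-$v_{i-1}$-plus-one), reducing the whole proposition to the identity $\underline y_i=\max\{k_{i-1}-\underline x_i-2,\,0\}=z_{i-1}-\underline x_i-2$, which forces the claim that $\underline y_i$ as defined is automatically non-negative in the relevant range — this non-negativity, rather than the recentering bookkeeping itself, is the subtle point and should follow from the fact that $(s;\underline y_1,\ldots,\underline y_g)=\phi(T^t)$ by Proposition~\ref{prop:trans1}, whose output is a genuine $v_0$-reduced divisor and hence has non-negative loop-distances.

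An alternative, and probably cleaner, route avoids touching the geometry of $\Gamma_g$ at all: use the Tropical Riemann–Roch Theorem~\ref{TRR} to pin down $\deg(K-c)=2g-2-d$ and $r(K-c)=r(c)-\deg(c)-1+g=r-d-1+g=g-d+r-1=s$, so that $K-c$ has exactly the rank and degree that $\phi(T^t)$ has; then invoke Theorem~\ref{th:cools}(2), which says that $\alpha$ maps $\NLLP$ bijectively onto the set of rank-$s$, degree-$(2g-2-d)$ classes satisfying the dimension constraint $(g-(2g-2-d)+s)(s+1)=g$ — and one checks that this constraint is equivalent to the original $(g-d+r)(r+1)=g$. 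Since $\phi(T^t)$ lands in that set and Proposition~\ref{prop:trans1} identifies $\phi(T^t)$ with the tuple $(s;\underline y_1,\ldots,\underline y_g)$, it only remains to show that $K-c$ and the $v_0$-reduced divisor $(s;\underline y_1,\ldots,\underline y_g)$ are the \emph{same} class, not merely classes of the same rank and degree — so one still needs the explicit recentering argument of the first paragraph to close the gap, and I would present that as the core of the proof, using the Riemann–Roch computation only to supply the values $s=g-d+r-1$ and $\deg=2g-2-d$ announced in the statement.
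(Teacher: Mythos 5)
Your overall plan---reduce $K-c$ to $v_0$-reduced form by explicit chip-firing, track a running vertex chip count, and use Riemann--Roch only to supply $s$ and the degree---is the same in spirit as the paper's proof, but two concrete gaps remain. First, your reduction runs in the wrong direction. Lemma~\ref{lem:park} converts a $v_{i-1}$-reduced divisor into the $v_i$-reduced one, so recentering ``from $v_0$ rightward, exactly as in Proposition~\ref{prop:sigmaalpha}'' is the procedure that produces the $v_g$-reduced divisor (i.e.\ the reflection data), not the $v_0$-reduced representative of $K-c$. To concentrate chips at $v_0$ one must work from right to left, successively firing the subgraphs $\gamma_{g-i+1}\cup\cdots\cup\gamma_g$, with the induction anchored at the right end (where $K-c$ has $2$ chips on $v_{g-1}$ and no chip in the interior of $\gamma_g$), not at $v_0$; this needs a mirrored analogue of Lemma~\ref{lem:park}, and your bookkeeping quantity $k_i$ (chips on $v_i$ in the $v_i$-reduced divisor equivalent to $K-c$) is not the quantity that actually appears, since the intermediate configurations in the right-to-left process are not $v_i$-reduced on their left side.

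Second, and more seriously, the step you summarize as ``verify that $k_{i-1}$ equals $z_{i-1}-1$'' is precisely the missing heart of the argument. In the right-to-left firing, the vertex chip count increments at the loop $\gamma_{g-i}$ exactly when $\underline x_{g-i}\in\{0,\,Z_{g-i}-1\}$ (where $Z_{g-i}$ is the current count), while the tableau statistic $z$ increments exactly when the entry $g-i$ lies in the last row or last column of $T$; identifying these two recursions requires the combinatorial fact (Lemma~\ref{Lem:XZ} in the paper) that $\underline x_j=0$ iff $j$ is in the last column, $\underline x_j=z_j-1$ iff $j$ is in the last row, and otherwise $0<\underline x_j<z_j-2$, so that in particular $\underline x_j=z_j-2$ and $\underline x_j\geq z_j$ never occur---which is needed even to know which firing case applies. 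Appealing to Proposition~\ref{prop:trans1} for nonnegativity of the $\underline y_i$ does not supply this: nonnegativity neither determines which case of the firing occurs nor ties the chip count to the tableau-defined $z_i$. Relatedly, your guessed formula $\underline y_i=\max\{k_{i-1}-\underline x_i-2,\,0\}$, modeled on~(\ref{eq:relate}), is not the correct identity: the relation that holds in every case is $\underline x_{g-i}+Y_{g-i}=Z_{g-i-1}-2$ with the \emph{post}-firing count $Z_{g-i-1}$, and once the lemma above is in place no maximum is ever needed. Without that lemma (or an equivalent substitute) the induction matching $(s;\underline y_1,\dots,\underline y_g)$ of formula~(\ref{eq:yy}) to the actual $v_0$-reduction of $K-c$ does not close.
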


\begin{proof} [Proof of Proposition \ref{prop:trans1}]
Let $T^t$ denote the conjugate of $T$.  Following the logic of Proposition \ref{prop:x}, the first value of tuple $\phi(T^t)$ is one less than the number of columns of $T^t$.  Using the fact that $T^t \in SYT((g-d+r)^{(r+1)})$, we obtain that this value is $s = g-d+r-1$ as desired.  

It next suffices to show the equality $\underline x_i + \underline y_i = z_{i-1} - 2$ for all $1 \leq i \leq g$.  
Using formula (\ref{eq:phi}) for the $\underline x_i$'s and switching the roles of ``rows'' and ``columns'', or $T$ and $T^t$, to obtain a formula for the $\underline y_i$'s yields
\begin{eqnarray*}\underline x_i + \underline y_i &=& 
\left(r + l_r(i,T) - l_{c}(i,T) - L_{last}(i,T)\right) \\ &~& ~\hspace{0.1in} + \left(s + l_c(i,T) - l_{r}(i,T) - L_{last}(i,T^t)\right) \\ &=& r + s - L_{last}(i,T) - L_{last}(i,T^t), 
\end{eqnarray*}
where $L_{last}(i,T)$ (resp. $L_{last}(i,T^t)$) equals the number of cells in the last column of $T$ (resp. $T^t$) whose entries are strictly less than $i$.
It follows that
\begin{eqnarray*}
(s+1) - L_{last}(i,T) ~ &=& \#(\mathrm{cells}> (i-1)\mathrm{~in~the~last~column~of~T}), \mathrm{~and} \\
(r+1) - L_{last}(i,T^t) &=& \#(\mathrm{cells}> (i-1)\mathrm{~in~the~last~row~of~T}).
\end{eqnarray*}
Adding these two together, and subtracting $2$, we conclude that $$\underline x_i + \underline y_i = r + s - L_{last}(i,T) - L_{last}(i,T^t) = z_{i-1} - 2$$ as desired (keeping in mind that $z_{i-1}$ double-counts the unique cell in the last row and column). 
\end{proof}

Before proving Proposition \ref{prop:trans2}, we need one Lemma.

\begin{lemma} \label{Lem:XZ}
Let $T$ be an $(r+1)\times (s+1)$ rectangular standard Young tableau, and suppose that $\phi(T) = (r; \underline x_1, \underline x_2, \dots, \underline x_g)$.  Define $z_i$ as in Proposition \ref{prop:trans1}.  Then for $1 \leq i\leq g-1$, we have $\underline x_i = 0$ if the entry $i$ is in the last column of $T$ and $\underline x_i = z_i -1$ if the entry $i$ is in the last row of $T$.  If the entry $i$ is in neither the last row nor the last column, then $0 < \underline x_i < z_i - 2$.  Note in particular that $\underline x_i = z_i - 2$ is not possible.
\end{lemma}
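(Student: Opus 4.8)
The plan is to analyze the formula \eqref{eq:phi} for $\phi$, namely $\underline x_i = r + l_r(i,T) - l_c(i,T) - L_{last}(i,T)$, together with the definition $z_i = \#\{j > i \text{ in the last row or last column of } T\} + 1$, treating separately the three cases: $i$ in the last column, $i$ in the last row, and $i$ elsewhere. The last-column case is immediate from the already-established fact (in the proof of Proposition \ref{prop:x}) that $\underline x_i = 0$ whenever $i$ lies in column $r+1$. For the last-row case, I would use that when $i$ is in the last row, $l_r(i,T) = r+1$ (the tableau has $g-d+r = r+1$ rows after relabeling via $T \in \SYT((s+1)^{r+1})$), so \eqref{eq:phi} gives $\underline x_i = r + (r+1) - l_c(i,T) - L_{last}(i,T) = (2r+1) - l_c(i,T) - L_{last}(i,T)$; then I must match this against $z_i - 1$.

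For the matching, the key is to reinterpret $z_i$ by counting cells $> i$ in the last row and last column of $T$. Since $i$ is in the last row, say in column $c = l_c(i,T)$ with $c \leq s$, the cells in the last row exceeding $i$ are exactly those in columns $c+1, \dots, s+1$, contributing $s+1-c$ (using that the last row is increasing). The cells in the last column exceeding $i$: since $L_{last}(i,T)$ counts cells in the last column strictly less than $i$, and the last column has $r+1$ cells none of which equals $i$ (as $i$ is in the last row, a different cell unless $c = s+1$, but that is the last-column case), the count of last-column cells $> i$ is $(r+1) - L_{last}(i,T)$. Being careful not to double-count the unique bottom-right corner cell (which lies in both the last row and last column and exceeds $i$ precisely when $i < g$, which holds by hypothesis), we get $z_i - 1 = (s+1-c) + (r+1-L_{last}(i,T)) - 1 = (s + r + 1) - c - L_{last}(i,T)$. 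Comparing with $\underline x_i = (2r+1) - c - L_{last}(i,T)$ and recalling the relabeling gives $s = r$ in this square case, the two expressions coincide, so $\underline x_i = z_i - 1$.

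For the generic case ($i$ in neither the last row nor last column), I would argue the two-sided bound monotonically: $\underline x_i > 0$ follows because $i$ is placed in a column $j$ with $1 \le j \le r$, so $p_{i-1}+e_j \in \mathcal C$ forces $p_{i-1}(j) \geq 1$, and by \eqref{eq:alpha}, $\underline x_i = p_{i-1}(j) \geq 1$. For the upper bound $\underline x_i < z_i - 2$, I would compare with the last-row case: moving from $i$'s actual position to the cell of the last row in the same column strictly increases the entry (the column is increasing and $i$ is not in the last row), so there is some $i' > i$ in the last row of that column; a monotonicity argument on $\underline x$ along a single column — or more directly, the identity $\underline x_i = p_{i-1}(j)$ with $p_{i-1}(j) < p_{i'-1}(j) = \underline x_{i'}$ combined with the last-row value $\underline x_{i'} = z_{i'} - 1 \leq z_i - 1$ — gives $\underline x_i \leq z_i - 2$. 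To get the strict inequality $\underline x_i < z_i - 2$ (equivalently, to rule out equality), I would note that $z_i$ also counts the corner cell and at least one cell of the last row lying strictly between $i$'s column and the corner, so the gap between $\underline x_i$ and $z_i - 1$ is at least $2$; alternatively I can simply cite that the values $r + l_r - l_c - L_{last}$ and $z_i - 1$ differ by exactly the number of last-column cells strictly below $i$'s row plus the number of last-row cells strictly right of $i$'s column, which is at least $2$ when $i$ avoids both.

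The main obstacle I expect is the careful bookkeeping in the generic case: ensuring the strict bound $\underline x_i < z_i - 2$ rather than merely $\underline x_i \leq z_i - 2$, which requires pinning down that both "one step down to the last row" and "one step right to the last column" each strictly contribute, and that the corner is not double-counted — exactly the parity of counting that made the $z_{i-1}$ in Proposition \ref{prop:trans1} a "$-2$" rather than a "$-1$".
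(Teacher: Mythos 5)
Your overall strategy is the paper's: evaluate $\underline x_i$ via formula~(\ref{eq:phi}) and compare it with a corner-corrected count of the cells exceeding $i$ in the last row and last column; the last-column case (quoting Proposition~\ref{prop:x}) and the lower bound $\underline x_i=p_{i-1}(j)\geq 1$ are fine. But two steps are genuinely broken. In the last-row case you take $l_r(i,T)=r+1$, arrive at $\underline x_i=(2r+1)-l_c(i,T)-L_{last}(i,T)$ versus $z_i-1=(r+s+1)-l_c(i,T)-L_{last}(i,T)$, and reconcile them by asserting $s=r$. The lemma is not restricted to square tableaux: it is invoked in Proposition~\ref{prop:trans2} for arbitrary $(g-d+r)\times(r+1)$ shapes (the running example of Example~\ref{ex:running} has $s=1\neq r=2$). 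Since $\phi(T)=(r;\ldots)$ forces $T$ to have $r+1$ columns and hence $s+1=g-d+r$ rows, the correct substitution for $i$ in the last row is $l_r(i,T)=s+1$, giving $\underline x_i=(r+s+1)-l_c(i,T)-L_{last}(i,T)$, which equals your corner-corrected count $z_i-1$ with no squareness assumption; as written, your argument proves the identity only when $g=(r+1)^2$.

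In the generic case, your primary route relies on $p_{i-1}(j)<p_{i'-1}(j)$, where $i'$ is the bottom entry of $i$'s column. This monotonicity is false: between $i$ and $i'$ the path may take more $(-1,\ldots,-1)$ steps (last-column entries) than $e_j$ steps. For the $3\times 3$ tableau with rows $(1,3,4)$, $(2,5,6)$, $(7,8,9)$ and $i=2$, $j=1$, $i'=7$, one has $\underline x_2=p_1(1)=3$ while $p_6(1)=2$, so your chain would yield $\underline x_2<\underline x_7=2$, which is false; the argument therefore does not establish the bound. Your fallback claim that $z_i-1-\underline x_i$ equals \emph{exactly} the number of last-column cells below $i$'s row plus last-row cells right of $i$'s column is also not an identity (the exact difference is $(s+1-l_r(i,T))+(l_c(i,T)-\#\{\text{last-row entries}<i\})$; last-row cells to the left of $i$'s column may exceed $i$), and your other heuristic fails when $i$ lies in column $r$, where no last-row cell lies strictly between $i$'s column and the corner. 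What is actually needed is only an inequality, and it is the paper's: since the bottom cell of $i$'s own column is strictly greater than $i$, the last row contributes at least $r+2-l_c(i,T)$ cells exceeding $i$, so $(r+s+1)-l_c(i,T)-L_{last}(i,T)\leq z_i-2$; subtracting $\alpha=s+1-l_r(i,T)\geq 1$ gives $\underline x_i\leq z_i-2-\alpha<z_i-2$. Replacing your two heuristics by this count (and fixing the row-number bookkeeping above) repairs the proof.
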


\begin{proof}
The first statement was already shown in the proof of Proposition \ref{prop:x}.  To prove the second statement, assume that $i$ in the last row of $T$.  From formula (\ref{eq:phi}), we obtain $\underline x_i = r + (s+1) - l_c(i,T) - L_{last}(i,T)$ in this case.  Noting that $r+s+1$ equals the number of cells in the last row or last column of $T$, we see that $\underline x_i$ counts the number of cells in the last row or last column greater than $i$.  However, this is exactly the definition of $z_i - 1$ and hence the second statement is proven. 

In the event that the entry $i$ is in neither the last row nor the last column, then 
$\underline x_i = (r + s + 1) - l_c(i,T) - L_{last}(i,T)) - \alpha < z_i - 1 -\alpha$ where 
$\alpha = s+1 - l_r(i,T) \geq 1$.  Note that we have an inequality on the RHS instead of an equality this time because if $i$ is in column $l_c(i,T)$ then the bottom row of that column is greater than $i$ as opposed to merely equal to $i$.  We could additionally have entries in the bottom row to the left of column $l_c(i,T)$ that are greater than $i$, thus we have the desired inequality.
\end{proof}

\begin{proof} [Proof of Proposition \ref{prop:trans2}]
We first note that for graph $\Gamma_g$, the canonical divisor $K$ is simply 
given as $2v_1 + 2v_2 + \dots + 2v_{g-1}$.  Hence, the degree of $K-c$ is $d^t = 2(g-1) -d$ and the 
rank is $r^t = g - d + r - 1 = s$ by the Tropical Riemann-Roch Theorem.  In particular, we still have the equality $(g - d^t + r^t)(r^t + 1) = g$ since $(g - d^t + r^t)(r^t + 1) = (g - (2g - 2 - d) + (g - d + r - 1)) (g- d + r) = (r+1)(g-d+r)$. 

Secondly, we take $K-c$, where $c$ is represented by $(r; \underline x_1, \underline x_2,\dots, \underline x_g)$, and compute its $v_0$-reduction by successively firing the subgraphs 
$\gamma_g$, $\gamma_{g-1} \cup \gamma_g$, etc. enough times from right to left.
We define a sequence $(Z_{g-1},Z_{g-2},\dots, Z_{0})$ by letting $Z_{g-i}$ denote the number of chips on vertex $v_{g-i}$ after the subgraph $\gamma_{g-i+2} \cup \gamma_{g-i+3} \cup \dots \cup \gamma_g$ has been fired enough times.  In particular, $Z_{g-1}=2$ since in the divisor $K-c$, the vertex $v_{g-1}$ (before any firing) has $2$ chips on it.  We also obtain $Z_{g-2} = 3$ since $\gamma_g$ contains $2$ chips on $v_{g-1}$, and nowhere else, so after $\gamma_g$ is fired, there are $3$ chips on vertex $v_{g-2}$.  

We also define the sequence $(Y_g,Y_{g-1},\dots, Y_1)$ by letting $Y_{g-i}$ denote the counter-clockwise distance from $v_{g-i}$ of the unique chip on the loop $\gamma_{g-i}\setminus
\{v_{g-i-1},v_{g-i}\}$ after $\gamma_{g-i+1} \cup \gamma_{g-i+2}\cup \dots \cup \gamma_g$ has been fired, with the convention that $Y_{g-i} = 0$ if no such chip exists.
For example, the entry $g$ must be in the unique cell in the last row and last column, thus 
$\underline x_g = r + (s+1) - (r+1) - s = 0$ by formula (\ref{eq:phi}).  Consequently, the divisor $K-c$ has no chips on $\gamma_g \setminus \{v_{g-1},v_{g}\}$, and we obtain $Y_g = 0$ (no firings have yet taken place). 

We next observe that if $\underline x_{g-1}=1$, we fire $\gamma_g$ and obtain $Y_{g-1} = 0$ while if
$\underline x_{g-1} = 0$, we instead obtain $Y_{g-1} = 1$ after firing $\gamma_g$.  Note that formula (\ref{eq:phi}) yields
$\underline x_{g-1} = r + s - (r+1) - (s-1) = 0$ (resp. $\underline x_{g-1} = r + (s+1) - r - s = 1$) if $(g-1)$ is above (resp. to the left of) the cell containing $g$.  
Since entry $(g-1)$ must be in one of the two cells next to entry $g$, no other values for $\underline x_{g-1}$ are possible.

Having fired subgraphs $\gamma_{g-i+2} \cup \gamma_{g-i+3} \cup \dots \cup \gamma_g$ to 
clear chips from vertices $v_{g-i+1}, v_{g-i+2}, \dots v_g$ and ensure that there are no negative chips to the right of $v_{g-i}$, we next focus on the loop $\gamma_{g-i}$.  
Inductively, it contains $Z_{g-i}$ chips at $v_{g-i}$, $2$ chips at $v_{g-i-1}$ and $-1$ chips a counter-clockwise distance of $\underline x_{g-i}$ from 
$v_{g-i}$ (unless $\underline x_{g-i} = 0$ in which case the only chips are at $v_{g-i}$ and 
$v_{g-i-1}$) at this point.  Firing the subgraph $\gamma_{g-i+1} \cup \gamma_{g-i+2} \cup \dots \cup \gamma_g$, for $1\leq i \leq g-1$, we have three cases:

i) if $\underline x_{g-i} = 0$, then $Z_{g-i-1} = Z_{g-i} + 1$ and we have one chip left a counter-clockwise distance $Y_{g-i} = Z_{g-i} - 1$ from $v_{g-i}$.

ii) if $\underline x_{g-i} = Z_{g-i} - 1$, then we can move all of the chips to the left so we obtain 
$Z_{g-i-1} = Z_{g-i} + 1$ and $ Y_{g-i} = 0$.

iii) if $0 < \underline x_{g-i} < Z_{g-i} - 2$, then we fire the subgraph $\gamma_{g-i+1} \cup \gamma_{g-i+2} \cup \dots \cup \gamma_g$ in two steps, first eliminating the negative chip, and second moving all of the remaining chips off of vertex $v_{g-i}$.  We thus compute $Z_{g-i-1} = Z_{g-i}$ (no increase) and $ Y_{g-i} = Z_{g-i} - \underline x_{g-i} - 2$.

Following this process of $v_0$-reduction, it is clear that the tuple representing the $v_0$-reduction of $K-c$ is $(s; Y_1, Y_2, \dots Y_{g-1}, Y_g)$.  Furthermore, as we see from the above three cases, corresponding to firing $\gamma_{g-i+1} \cup \gamma_{g-i+2} \cup \dots \cup \gamma_g$, we have  
\begin{eqnarray} \label{eq:YZ} \underline x_{g-i} + Y_{g-i} = Z_{g-i-1} -2 \mathrm{~for~all~}1 \leq i \leq g-1.\end{eqnarray}
We therefore finish the proof by proving simultaneously $Z_{g-i-1} = z_{g-i-1}$ and $Y_{g-i} = \underline y_{g-i}$, for all $0 \leq i \leq g-1$, by double-induction.

Note that the base cases $z_{g-1} = Z_{g-1} = 2$, $z_{g-2} = Z_{g-2} = 3$, $\underline y_{g} = Y_g = 0$ and $\underline y_{g-1} = Y_{g-1} = 1- \underline x_{g-1}$ were shown above.
Assume now by induction that $z_{g-i} = Z_{g-i}$ and $\underline y_{g-i+1} = Y_{g-i+1}$.  If the entry $i$ is in the last column or last row of $T$, then $z_{g-i-1} = z_{g-i}+1$ by definition, and 
$\underline x_{g-i} = 0$ or $\underline x_{g-i} = z_{g-i}-1 = Z_{g-i}-1$ by Lemma \ref{Lem:XZ} and the induction hypothesis.  It follows by cases (i) and (ii) above and the induction hypothesis again that $Z_{g-i-1} = Z_{g-i} + 1 = z_{g-i} + 1 = z_{g-i-1}$ in these cases.  

On the other hand, if the entry $i$ is not in the last row or last column of $T$, then 
$z_{g-i-1} = z_{g-i}$ by definition, and $0 < \underline x_{g-i} < Z_{g-i} - 2$ by Lemma \ref{Lem:XZ}.  Then by case (iii), we have $Z_{g-i-1} = Z_{g-i}$ and the equality $z_{g-i-1} = Z_{g-i-1}$ follows again by induction. 

Finally, in both cases, the identities 
 (\ref{eq:yy}) and (\ref{eq:YZ}) imply the equality of $\underline y_{g-i}$ and $Y_{g-i}$, thus  finishing this inductive proof.   
\end{proof}

\begin{example}
Let $c=\phi(T)$ denote the chip-configuration 
$(2; \underline 2, \underline 3, \underline 1, \underline 0, \underline 1, \underline 0)$, i.e. the 
running example, Example \ref{ex:running}, where $T$ is the tableau     
   \[
    T = 
    \begin{tabular}{|r|r|r|}
        \hline
		1 & 3 & 4 \\
		\hline
		2 & 5 & 6 \\
		\hline
	\end{tabular}~.
    \]
By successively firing (i) $\gamma_6$, (ii) $\gamma_5 \cup \gamma_6$, 
(iii) $\gamma_4 \cup \gamma_5 \cup \gamma_6$, 
(iv) $\gamma_4 \cup \gamma_5 \cup \gamma_6$ again, 
(v) $\gamma_3 \cup \dots \cup \gamma_6$, 
(vi) $\gamma_2 \cup \dots \cup \gamma_6$,
and finally (vii) $\gamma_2 \cup \dots \cup \gamma_6$ again, we 
$v_0$-reduce the divisor $K-c$ into the divisor
$(1; \underline 1, \underline 0, \underline 1, \underline 2, \underline 0, \underline 0)$.  
See Figure \ref{fig:conj-chips}.  Using formula (\ref{eq:phi}) to compute $\phi(T^t)$, where 
    \[
    T^t = 
    \begin{tabular}{|r|r|}
        \hline
		1 & 2 \\
		\hline
		3 & 5 \\
		\hline
		4 & 6 \\
		\hline
	\end{tabular}~,
    \]
we do indeed obtain that $\phi(T^t)$ is the $v_0$-reduced divisor equivalent to $K-\phi(T)$.

\begin{figure}[h!tbp]
\centering
\subfigure{\includegraphics{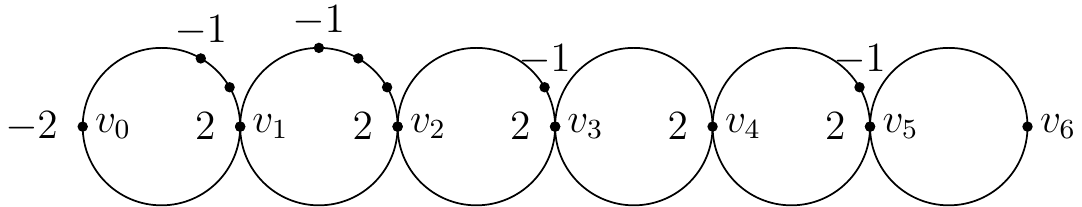}}
(i) \subfigure{\includegraphics{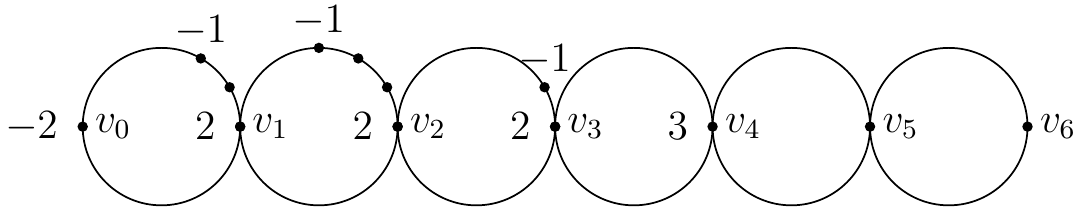}}
(ii) \subfigure{\includegraphics{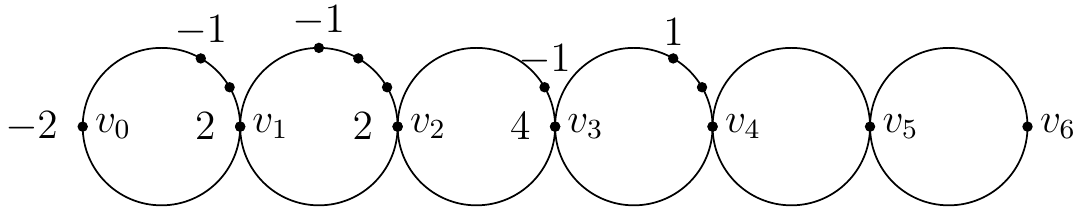}}
(iii) \subfigure{\includegraphics{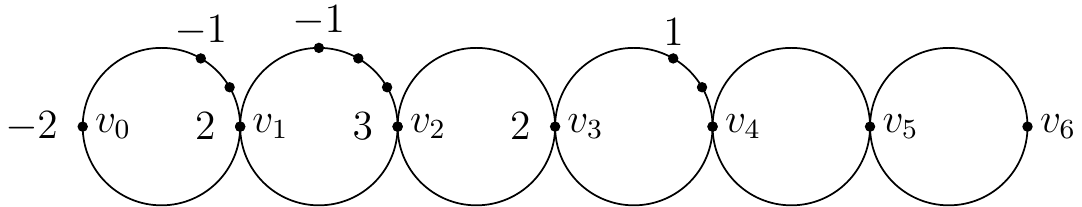}}
(iv) \subfigure{\includegraphics{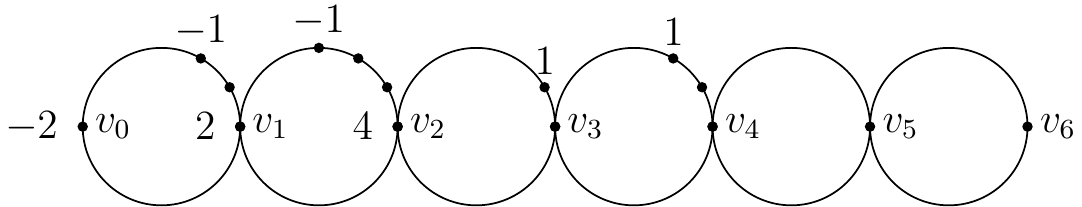}}
(v) \subfigure{\includegraphics{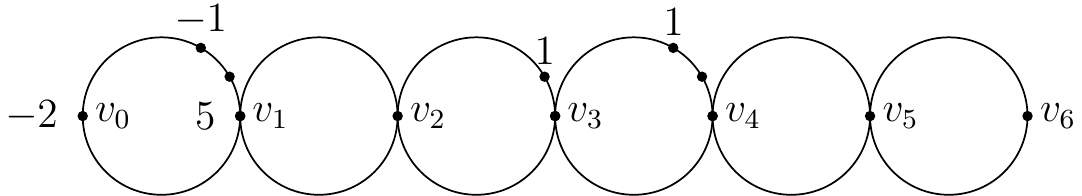}}
(vi) \subfigure{\includegraphics{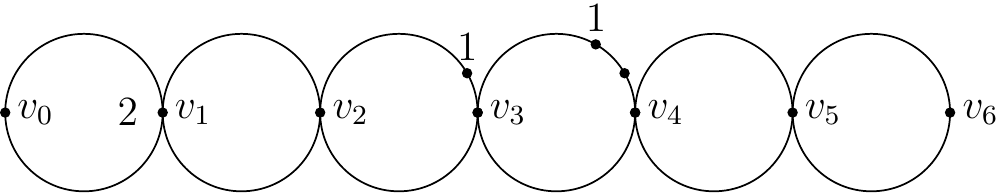}}
(vii) \subfigure{\includegraphics{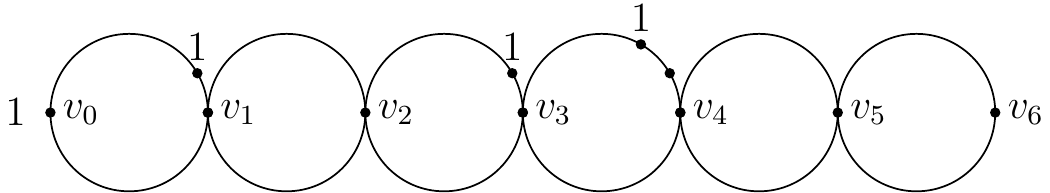}}
\caption{Successively $v_0$-reducing the divisor $(K-c)$ to $\phi(T^t)$}
\label{fig:conj-chips}
\end{figure}

\end{example}

\section{Acknowledgments}
This research was conducted at the $2011$ and $2012$ summer REU (Research Experience for
Undergraduates) programs at the University of Minnesota, Twin Cities, and was
partially funded by NSF grants DMS-$1001933$, DMS-$1067183$, and DMS-$1148634$. 
This led to an REU report \cite{Agrawal2011} predating this article.
The authors would
like to thank Profs.\ Vic Reiner and Pavlo Pylyavskyy, who along with author Gregg Musiker
directed the program, for their support. We would also like to thank the anonymous referees for their comments and suggestions.
\bibliographystyle{amsalpha}
\bibliography{bibliography}

\end{document}